\documentclass[11pt,reqno,final]{amsart}

\usepackage[scaled]{helvet} 
\usepackage{fourier} 
\usepackage[usenames,dvipsnames,svgnames,table]{xcolor}
\usepackage{color}
\usepackage{url,graphicx,tabularx,array,geometry, amsthm, amsfonts,parskip,alltt, amsmath,tikz}
\usepackage{hyperref}

\usepackage{url,graphicx,tabularx,array,geometry, amsthm, amsfonts,parskip,alltt, amsmath,hyperref,xcolor}
\usepackage{mathtools,tikz,algorithm,algpseudocode,amssymb}
\usepackage{pgf,tikz}
\usepackage{mathrsfs}
\usepackage{epstopdf}
\usetikzlibrary{arrows}

\newtheorem{theorem}{Theorem}

\usetikzlibrary{arrows}

\newtheorem{lemma}{Lemma}

\DeclareMathOperator{\argmin}{argmin}

\DeclareMathOperator{\R}{\mathbb{R}}

\newcommand{\ve}[1]{\mathbf{#1}}

\makeatletter
\renewcommand*{\ALG@name}{Method}
\makeatother

\title{Randomized Projection Methods for Linear Systems with Arbitrarily Large Sparse Corruptions}

\author{
Jamie Haddock and Deanna Needell
}

\begin{document}
\begin{abstract}
In applications like medical imaging, error correction, and sensor networks, one needs to solve large-scale linear systems that may be corrupted by a small number of arbitrarily large corruptions. We consider solving such large-scale systems of linear equations $A\ve{x}=\ve{b}$ that are inconsistent due to corruptions in the measurement vector $\ve{b}$. With this as our motivating example, we develop an approach for this setting that allows detection of the corrupted entries and thus convergence to the ``true'' solution of the original system. We provide analytical justification for our approaches as well as experimental evidence on real and synthetic systems.
\end{abstract}

\thanks{This material is based upon work supported by the National Science Foundation under Grant No. DMS-1440140 while the authors were in residence at the Mathematical Sciences Research Institute in Berkeley, California, during the Fall 2017 semester. JH was also partially supported by NSF grant DMS-1522158 and the University of California, Davis Dissertation Fellowship. DN was also supported by NSF CAREER award $\#1348721$ and NSF BIGDATA $\#1740325$.}

\maketitle

\section{Introduction}
We consider solving large-scale systems of linear equations represented by a matrix $A\in\mathbb{R}^{m\times n}$ and vector $\ve{b}\in\mathbb{R}^m$. We are interested in the highly overdetermined setting, where $m \gg n$, which means the system need not necessarily have a solution.  One may then seek the least squares solution $\ve{x}_{\text{LS}}$ which minimizes $\|A\ve{x}-\ve{b}\|^2$ (where $\|\cdot\|$ denotes the Euclidean norm); many efficient solvers have been developed that converge to such a solution. An alternative setting is one where there is a solution $\ve{x}^*$ (which we refer to as the \textit{pseudo-solution}) to our desired system $A\ve{x}=\ve{b}^*$, but rather than observing $\ve{b}^*$ we only have access to a corrupted version, $\ve{b}$, where $\ve{b} = \ve{b}^* + \ve{b}_C$.  When the number of non-zero entries in $\ve{b}_C$, denoted $\|\ve{b}_C\|_0$, is small relative to $m$, one may still hope to recover the ``true'' solution $\ve{x}^*$.\footnote{This paper extends work previously presented in \cite{HN17Corrupted}.} This type of sparse corruption models many applications, ranging from medical imaging to sensor networks and error correcting codes.  For example, a small number of sensors may malfunction, resulting in large catastrophic reporting errors in the vector $\ve{b}$; since the reporting errors themselves may be arbitrarily large, the least squares solution is far from the desired solution, but since the number of such reporting errors is small, we may still hope to recover the true solution to the uncorrupted system. We emphasize that such a pseudo-solution $\ve{x}^*$ may be very far from the least squares solution $\ve{x}_{\text{LS}}$ when the entries in $\ve{b}_C$ are large, even when there are only a few non-zero corruptions; see Figure \ref{LSpic} for a visual. Similar types of sparse errors may also appear in medical imaging from artifacts or system malfunctions, or in error correcting codes from transmission errors.  Indeed, the problem of so-called \textit{sparse recovery} is well-studied in the approximation and compressed sensing literature \cite{FR12:Mathematical-Introduction,eldar2012compressed}.  However, in this paper, we are concerned with the setting where the system is highly \textit{overdetermined}, the errors in $\ve{b}$ are sparse and large, and the system may be so large-scale that it cannot be fully loaded into memory. This latter property has sparked a recent resurgence of work in the area of iterative solvers that do not need access to the entire system at once \cite{gordon1975image,herman1978relaxation,natterer2001mathematics,strohmer2009randomized}. 
Our work is motivated by such iterative methods.

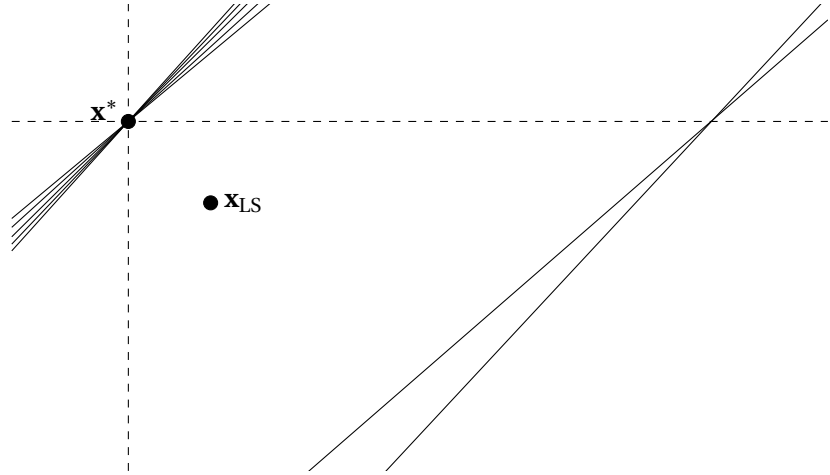
\begin{figure}
\begin{center}
\newcommand{\MyPath}{(-2,13/7)--(9/11,16/11)--(-1.32558, -1.04651)--(-2., -0.625)}
\begin{tikzpicture}[scale=1.55]
\clip (-1,1) rectangle (6,-3);
\draw (-1,-.909)--(6,5.454);
\draw (-1,-1.111)--(6,6.666);
\draw (-1,-.990)--(6,5.940);
\draw (-1,-1.053)--(6,6.318);
\draw (-1,-.833)--(6,4.998);
\draw (-1,-6.451)--(6,1.074);
\draw (-1,-5.218)--(6,.872);
\draw[dashed] (0,7)--(0,-7);
\draw[dashed] (-1,0)--(6,0);
\draw[fill] (0, 0) circle [radius=0.06];
\draw[fill] (.7058, -0.6988) circle [radius=0.06];
\node at (-0.2,0.1) {$\ve{x}^{*}$};
\node at (0.99,-0.7) {$\ve{x}_{\text{LS}}$};
\end{tikzpicture}
\end{center}
\caption{A system for which the pseudo-solution $\ve{x}^\star$ is very far from the least squares solution $\ve{x}_{\text{LS}}$.  Lines represent the hyperplanes consisting of all systems $\{\ve{x}: \ve{a}_i^T\ve{x} = b_i\}$ for rows $\ve{a}_i^T$ of $A$.}\label{LSpic}
\end{figure}

It is important to point out that solving for the pseudo-solution of systems $A\ve{x} = \ve{b} = \ve{b}^* + \ve{b}_C$ where $\|\ve{b}_C\|_0$ is small relative to $m$ is related to finding a solution of a large consistent system within an inconsistent system. 
 The problem of finding the maximal consistent subsystem of an inconsistent system is known as \texttt{MAX-FS} and it is known to be NP-hard without a polynomial-time approximation scheme (PTAS) \cite{amaldikann}.  This problem is one of the focuses of infeasibility analysis, the study of changes necessary to make an infeasible system of linear constraints feasible \cite{murty2000infeasibility}. There are approximation algorithms \cite{nutovreichman} or, of course, one can solve the problem in a brute force manner.  Approaches for solving this problem generally fall into two categories, heuristic methods which use solutions to relaxations of subproblems, and branch-and-cut strategies for solving the integer program formulation of this problem \cite{chinneck2001fast, mangasarian1994misclassification, pfetsch2008branch}.  These methods are not row-action methods and generally require operating on the entire system or large subsystems, making them impractical for our setting. It has been previously observed that the behavior of projection and relaxation methods can detect inconsistent systems and thermal variants of these methods have been developed for identifying consistent subsystems and even for approximating \texttt{MAX-FS} solutions \cite{cloninger, amaldi}.  These methods are row-action methods, but are designed for \texttt{MAX-FS} problems rather than the large consistent subsystem setting we have described, and a comparison is not natural.
 
From this viewpoint, our problem is that of solving a consistent \textit{subsystem} of equations where we assume the size of this system is large relative to the size of the entire inconsistent system.  We are motivated by the setting in which one must solve an overdetermined system of equations in which few of the rows have been corrupted.  Often, in applications, one is not concerned with finding the maximal feasible subsystem, but instead finding its solution, and this subsystem can be assumed to be large.  Applications in this framework include logic programming, error detection and correction in telecommunications, and infeasible linear programming models.  Our approach to solving for $\ve{x}^\star$ will make use of the randomized Kaczmarz method, which we discuss next.

The Kaczmarz method is a popular iterative solver for overdetermined systems of linear equations and is especially preferred for systems with an extremely large number of rows.  The method consists of sequential orthogonal projections toward the solution set of a single equation (or subsystem).  Given the system $A\ve{x} = \ve{b}$, the method computes iterates by projecting onto the hyperplane defined by the equation $\ve{a}_i^T \ve{x} = b_i$ where $\ve{a}_i^T$ is a selected row of the matrix $A$ and $b_i$ is the corresponding entry of $\ve{b}$.  The iterates are recursively defined as 
\begin{equation}\label{rk}
\ve{x}_{k+1} = \ve{x}_k + \frac{b_i - \ve{a}_i^T\ve{x}}{\|\ve{a}_i\|^2} \ve{a}_i
\end{equation} 
where $\ve{a}_i$ is selected from among the rows of $A$.  The seminal work \cite{strohmer2009randomized} proved exponential convergence for the randomized Kaczmarz method where the rows $\ve{a}_i$ are chosen with probability $\|\ve{a}_i\|^2/\|A\|_F^2$.  Since then many variants of the method have been proposed and analyzed for various types of systems, see e.g. \cite{gower2015randomized,needell2014paved,REK,eldar2011acceleration,LL10:Randomized-Methods}.

It is known that the randomized Kaczmarz method converges for systems $A\ve{x} = \ve{b}$ corrupted by noise with an error threshold dependent on $A$ and the noise. In \cite{deanna} it was shown that this method has iterates that satisfy:
\begin{equation}\label{RKits}
\|\ve{x}_k - \ve{x}_{\text{LS}}\|^2 \leq \left(1 - \frac{\sigma_{\text{min}}^2(A)}{\|A\|_F^2}\right)^k \|\ve{x}_0 -  \ve{x}_{\text{LS}}\|^2 + \frac{\|A\|_F^2}{\sigma_{\text{min}}^2(A)}\|\ve{e}\|_{\infty}^2,
\end{equation}
where $\sigma_{\text{min}}(A)$ denotes the minimum singular value of $A$, $\|A\|_F$ its Frobenius norm, $ \ve{x}_{\text{LS}}$ the least squares solution and $\ve{e} = \ve{b} - A \ve{x}_{\text{LS}}$ denotes the error term (also known as the residual).
 There are variants of this method that converge to the least squares solution \cite{censor1983strong,REK}, however these typically either require operations on the columns or unknown relaxation parameters.  Additionally, it is known that if a linear system of equations or inequalities is feasible then randomized Kaczmarz will provide a \emph{proof} or \emph{certificate of feasibility}, and there are probabilistic guarantees on how quickly it will do so \cite{SKM}.  However, we are now interested in using randomized Kaczmarz for infeasible systems in which the least-squares solution is unsatisfactory because it is far from satisfying most of the equations (e.g. the noise is sparse and large).  
 
\subsection{Contribution}

We develop methods that seek to identify the corrupted entries in $\ve{b}$ and then converge to the pseudo-solution.  Our methods consist of several `rounds' of many iterations of the Randomized Kaczmarz (RK) method.  The intuition behind these methods is that if there are only few corrupted equations and many consistent equations, then the iterations of RK will select consistent equations with high probability, producing an iterate near the pseudo-solution and then the largest residual entries will correspond to the corrupted equations. We give a lower bound on the probability that a single round detects the corrupted equations.  One may run many independent rounds and increase the probability of detecting these corrupted constraints.  We then give a lower bound on the probability that one of these many rounds will detect the corrupted equations.

\subsection{Notation}

For simplicity, we define some general notation to be used throughout the paper.  Let $||\cdot||$ refer to the Euclidean norm.  We will denote vectors in boldface (e.g., $\ve{x}$), and matrices and scalars in non-bold (e.g., $A$ and $b_i$).  We use $\ve{a}_i^T \in \mathbb{R}^n$ to represent the $i$th row of $A \in \mathbb{R}^{m \times n}$ and $\ve{e}_i \in \mathbb{R}^m$ to represent the $i$th coordinate vector.  We will denote the origin as $\ve{0} \in \mathbb{R}^n$. Let $[m] = \{1, 2, ..., m\}$ and let $[A]$ refer to the set of indices of the rows of matrix $A$ (i.e. for $A \in \mathbb{R}^{m \times n}$, $[A] = [m]$).  For $D \subset [A]$, we let $A_{D^C} = A_{[A] - D}$ be the submatrix of $A$ whose rows are indexed by the complement of $D$.  Denote the minimum singular value of $A$ as $\sigma_{\min}(A)$.

As mentioned above, we consider the situation in which $A \in \mathbb{R}^{m \times n}$ and $\ve{b} \in \mathbb{R}^m$ define an inconsistent system of equations, but there is a large consistent subsystem.  That is, $A$ and $\ve{b}^* \in \mathbb{R}^m$ defines the consistent system of equations with solution $\ve{x}^* \in \mathbb{R}^n$ (i.e. $A\ve{x}^* = \ve{b}^*$) which we will refer to as the pseudo-solution of the system of equations defined by $A$ and $\ve{b}$.  The right hand side vector $\ve{b}^*$ is corrupted by $\ve{b}_C$, so $\ve{b} = \ve{b}^* + \ve{b}_C$.  Let $I \subset [m]$ be the set of indices of inconsistent equations, i.e. $\text{supp}(\ve{b}_C) = I$ and $s:= |I| \ll m$.   
We refer to the amount of corruption in each index of $I$ by $\epsilon_i \in \mathbb{R}$, so $\ve{b}_C = \sum_{i \in I} \epsilon_i \ve{e}_i$. We let $\epsilon^*$ be the smallest absolute entry of the corruption, $\epsilon^* := \min_{i \in I} |\epsilon_i|$.  We will also use $A_*$ to refer to the matrix $A$ without the rows indexed by $I$, $A_* = A_{I^C}$, and likewise for $\ve{b}_*$. Note then that $\ve{b}_* := \ve{b}_{I^C} = \ve{b}^*_{I^C}$.   
For convenience, we will assume throughout the paper that the rows of $A$ are normalized to have unit norm.

\section{A Kaczmarz-type approach for corrupted systems}

We consider here solving a consistent system of linear equations that has been corrupted, $A\ve{x} = \ve{b}^* + \ve{b}_C$ with $\|\ve{b}_C\|_0 = s \ll m$. Formally, given matrix $A$ and right hand side vector $\ve{b}$, we are searching for $\ve{x}^*$ given by:
\begin{equation}\label{eq:zeronormopt}
(\ve{b}_C, \ve{x}^*) = \argmin_{\ve{b}_C, \ve{x}} \|\ve{b}_C\|_0 \quad\text{such that}\quad A\ve{x} = \ve{b}-\ve{b}_C. 
\end{equation}
One can design pathological examples of corrupted linear systems in which the solution to \eqref{eq:zeronormopt} differs from the pseudo-solution; however, typically these solutions coincide. 
In particular, for $\ve{a}_i$ in general position, this holds.
First, we recall the RK method and fundamental convergence results.

\begin{algorithm}
\caption{Randomized Kaczmarz \cite{strohmer2009randomized}}\label{alg:RK}
\begin{algorithmic}[1]
\Procedure{RK}{$A,\ve{b},\ve{x}_0,k$}
\For{$j=1,2,...,k$}
\State $\ve{x}_j = \ve{x}_{j-1} + \frac{b_{i_j} - \ve{a}_{i_j}^T \ve{x}_{j-1}}{\|\ve{a}_{i_j}\|^2} \ve{a}_{i_j}$ where $i_j = t \in [m]$ with probability proportional to $\|\ve{a}_{t}\|^2$.
\EndFor
\State \textbf{return} $\ve{x}_k$
\EndProcedure
\end{algorithmic}
\end{algorithm}
 
 \begin{theorem}\label{strohmervershynin}\cite{strohmer2009randomized}
 Let $\ve{x}$ be the solution of $A\ve{x} = \ve{b}$, then randomized Kaczmarz converges to $\ve{x}$ in expectation with the average error $$\mathbb{E}\|\ve{x}_k - \ve{x}\|^2 \le \left(1 - \frac{\sigma_{\min}^2(A)}{\|A\|_F^2 }\right)^k \|\ve{x}_0 - \ve{x}\|^2.$$
 \end{theorem}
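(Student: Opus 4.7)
The plan is to follow the classical one-step contraction argument used in the original Strohmer--Vershynin paper. Since $\ve{x}$ solves $A\ve{x} = \ve{b}$, we have $b_{i_j} = \ve{a}_{i_j}^T \ve{x}$, so the update in Method~\ref{alg:RK} can be rewritten as
\[
\ve{x}_{j} - \ve{x} = \ve{x}_{j-1} - \ve{x} - \frac{\ve{a}_{i_j}^T(\ve{x}_{j-1} - \ve{x})}{\|\ve{a}_{i_j}\|^2}\,\ve{a}_{i_j}.
\]
The first thing I would observe is that $\ve{x}_{j} - \ve{x}$ is the orthogonal projection of $\ve{x}_{j-1} - \ve{x}$ onto the hyperplane through the origin with normal $\ve{a}_{i_j}$, so the two terms on the right-hand side are orthogonal and the Pythagorean theorem gives
\[
\|\ve{x}_{j} - \ve{x}\|^2 = \|\ve{x}_{j-1} - \ve{x}\|^2 - \frac{|\ve{a}_{i_j}^T(\ve{x}_{j-1} - \ve{x})|^2}{\|\ve{a}_{i_j}\|^2}.
\]

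Next I would take the conditional expectation over the random choice of $i_j$, given $\ve{x}_{j-1}$. Because $i_j = t$ is selected with probability $\|\ve{a}_t\|^2/\|A\|_F^2$, the weighting cancels the denominator:
\[
\mathbb{E}\!\left[\frac{|\ve{a}_{i_j}^T(\ve{x}_{j-1} - \ve{x})|^2}{\|\ve{a}_{i_j}\|^2}\,\Big|\,\ve{x}_{j-1}\right]
= \sum_{t=1}^{m} \frac{\|\ve{a}_t\|^2}{\|A\|_F^2}\cdot\frac{|\ve{a}_t^T(\ve{x}_{j-1}-\ve{x})|^2}{\|\ve{a}_t\|^2}
= \frac{\|A(\ve{x}_{j-1}-\ve{x})\|^2}{\|A\|_F^2}.
\]

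To convert this into a contraction in $\|\ve{x}_{j-1}-\ve{x}\|^2$, I would apply the singular value bound $\|A\ve{y}\|^2 \ge \sigma_{\min}^2(A)\|\ve{y}\|^2$ for all $\ve{y}$ (here for $\ve{y} = \ve{x}_{j-1}-\ve{x}$, which lies in the row space of $A$ when $\ve{x}_0 - \ve{x}$ does, and in general one only needs a lower bound on the quadratic form as in the overdetermined, full column rank setting assumed throughout the paper). Substituting gives
\[
\mathbb{E}\bigl[\|\ve{x}_j-\ve{x}\|^2 \,\big|\, \ve{x}_{j-1}\bigr] \le \left(1 - \frac{\sigma_{\min}^2(A)}{\|A\|_F^2}\right)\|\ve{x}_{j-1}-\ve{x}\|^2.
\]
Finally I would take total expectations and iterate this one-step bound from $j = 1$ up to $j = k$, yielding the stated geometric rate. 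The only subtle point, and where I would be careful, is the application of the singular value lower bound: it requires the error vector to avoid the nullspace of $A$. In the overdetermined, consistent regime of the paper this is automatic, but it is worth noting explicitly because it is the step that links the per-step orthogonal projection geometry to the global conditioning $\sigma_{\min}(A)$.
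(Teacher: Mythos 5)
Your argument is correct and is precisely the classical one-step contraction proof from the cited Strohmer--Vershynin paper: rewrite the update as an orthogonal projection of the error, apply the Pythagorean identity, take the conditional expectation so the row-norm weighting cancels and produces $\|A(\ve{x}_{j-1}-\ve{x})\|^2/\|A\|_F^2$, lower-bound by $\sigma_{\min}^2(A)$, and iterate via the tower property. The paper itself does not reprove this theorem but only cites it, so there is nothing to compare beyond noting that your derivation (including the remark that the singular value bound needs the error to avoid the nullspace, which is automatic in the overdetermined full-column-rank setting used here) faithfully reproduces the original proof.
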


The intuition behind our proposed approach is simple. Since the number of corruptions is small, most iterates of an RK approach will be close to the pseudo-solution, since it is rare to project onto a corrupted hyperplane. Therefore, if we run the RK method several times, or for several \textit{rounds} of iterations, most of the iterates upon which we halt will be close to the pseudo-solution.  Such iterates will also have the property that the largest components of their residual, $|A\ve{x}_k - \ve{b}|,$ will correspond to the large corrupted entries. We can thus utilize this knowledge to gradually detect the corruptions, remove them from the system, and solve for the desired pseudo-solution.

Our proposed methods can thus be described as follows.  Each method consists of $W$ rounds of $k$ RK iterations beginning with $\ve{x}_0 = \ve{0}$.  In each round, we collect the $d$ indices of the largest magnitude residual entries and after all rounds, we solve the system without the rows of $A$ indexed by these collected indices (there may be as many as $dW$ rows removed).  The methods differ in two ways.  First, we can choose to remove $d$ rows within each round (resulting in Method \ref{WKwR} below), or simply collect these indices and remove all collected rows after the $W$ rounds (resulting in Methods \ref{WKwoR} and \ref{WKwoRUS} below).  Second, when waiting to remove the rows until after $W$ rounds, we may simply select the $d$ largest residual entries in each round (Method \ref{WKwoR}), or we may require that the selected indices are always unique (so exactly $dW$ rows are removed), resulting in Method \ref{WKwoRUS}.  The values $W, k$ and $d$ are all parameters of the methods.  We give theoretical results for various natural choices of these parameters.

\begin{algorithm}
\caption{Multiple Round Kaczmarz with Removal}\label{WKwR}
\begin{algorithmic}[1]
\Procedure{MRKwR}{$A,\ve{b},k,W,d$}
\State $B = A$, $\ve{c} = \ve{b}$
\For{$i = 1,2,...W$}
\State $\ve{x}_k^i = RK(B,\ve{c},\ve{0},k)$
\State $D = \text{argmax}_{D \subset [B], |D| = d} \sum_{j \in D} |B\ve{x}_k^i - \ve{c}|_j.$
\State $B = B_{D^C}$, $\ve{c} = \ve{c}_{D^C}$
\EndFor
\State \textbf{return} $\ve{x}$, where $B \ve{x} = \ve{c}$
\EndProcedure
\end{algorithmic}
\end{algorithm}

\begin{algorithm}
\caption{Multiple Round Kaczmarz without Removal}\label{WKwoR}
\begin{algorithmic}[1]
\Procedure{MRKwoR}{$A,\ve{b},k,W,d$}
\State $S = \emptyset$
\For{$i = 1,2,...W$}
\State $\ve{x}_k^i = RK(A,\ve{b},\ve{0},k)$
\State $D = \text{argmax}_{D \subset [A], |D| = d} \sum_{j \in D} |A\ve{x}_k^i - \ve{b}|_j.$
\State $S = S \cup D$
\EndFor
\State \textbf{return} $\ve{x}$, where $A_{S^C} \ve{x} = \ve{b}_{S^C}$
\EndProcedure
\end{algorithmic}
\end{algorithm}

\begin{algorithm}
\caption{Multiple Round Kaczmarz without Removal with Unique Selection}\label{WKwoRUS}
\begin{algorithmic}[1]
\Procedure{MRKwoRUS}{$A,\ve{b},k,W,d$}
\State $S = \emptyset$
\For{$i = 1,2,...W$}
\State $\ve{x}_k^i = RK(A,\ve{b},\ve{0},k)$
\State $D = \text{argmax}_{D \subset [A] - S, |D| = d} \sum_{j \in D} |A\ve{x}_k^i - \ve{b}|_j.$
\State $S = S \cup D$
\EndFor
\State \textbf{return} $\ve{x}$, where $A_{S^C} \ve{x} = \ve{b}_{S^C}$
\EndProcedure
\end{algorithmic}
\end{algorithm}


\subsubsection{Main Results}

Our theoretical results provide a lower bound for the probability of successfully removing all corrupted equations after performing Method \ref{WKwoR} or Method \ref{WKwoRUS} with natural values for $k, d$ and $W$.  Lemma \ref{lem:detectionhorizon} shows that there is a \emph{detection horizon} around the pseudo-solution, so that if $\|\ve{x} - \ve{x}^*\|$ is sufficiently small, the largest residual entries (of $|A\ve{x}-\ve{b}|$) correspond exactly to the corrupted equations and we may distinguish these equations from the consistent system.  Lemma \ref{lem:probbound} gives a value of $k$ so that after $k$ iterations of Randomized Kaczmarz, one can give a nonzero lower bound on the probability that the current iterate is within the detection horizon.  Theorems \ref{thm:d=Iwindowedprob} and \ref{thm:dsmallerthanIwindowedprob} then give lower bounds on the probability of successfully detecting all corrupted equations in one out of all $W$ rounds for Methods \ref{WKwoR} and \ref{WKwoRUS}, respectively.  Proofs of all results are contained in the appendix. 

\begin{lemma}\label{lem:detectionhorizon} If $\|\ve{x} - \ve{x}^*\| < \frac{1}{2} \epsilon^*$ we have that the $d \le s$ indices of largest magnitude residual entries are contained in $I$; that is for $$D = \underset{D \subset [A], |D| = d}{\text{argmax}} \; \underset{i \in D}{\sum} |A\ve{x} - \ve{b}|_i$$ we have $D \subset I$.
\end{lemma}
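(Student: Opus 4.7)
The plan is to show, by the triangle inequality, that under the hypothesis $\|\ve{x}-\ve{x}^*\| < \tfrac{1}{2}\epsilon^*$ the residual entries split cleanly into two groups: those indexed by $I$ are all at least $\tfrac{1}{2}\epsilon^*$, and those indexed by $I^C$ are all strictly less than $\tfrac{1}{2}\epsilon^*$. Once this separation is established, any $d \le s$ largest-magnitude residual entries must come from $I$.

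First, I would fix an arbitrary index $j \in [m]$ and compute the residual entry $|A\ve{x}-\ve{b}|_j = |\ve{a}_j^T\ve{x} - b_j|$. For a clean (uncorrupted) index $j \in I^C$ we have $b_j = b_j^* = \ve{a}_j^T\ve{x}^*$, so
\[
|A\ve{x}-\ve{b}|_j = |\ve{a}_j^T(\ve{x}-\ve{x}^*)| \le \|\ve{a}_j\|\,\|\ve{x}-\ve{x}^*\| < \tfrac{1}{2}\epsilon^*,
\]
using Cauchy–Schwarz together with the standing assumption that the rows of $A$ are unit-normalized. For a corrupted index $j \in I$ we have $b_j = \ve{a}_j^T\ve{x}^* + \epsilon_j$, so by the reverse triangle inequality
\[
|A\ve{x}-\ve{b}|_j = |\epsilon_j - \ve{a}_j^T(\ve{x}-\ve{x}^*)| \ge |\epsilon_j| - \|\ve{a}_j\|\,\|\ve{x}-\ve{x}^*\| > \epsilon^* - \tfrac{1}{2}\epsilon^* = \tfrac{1}{2}\epsilon^*,
\]
where we used $|\epsilon_j| \ge \epsilon^*$ by definition of $\epsilon^*$.

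Combining the two estimates, every residual entry indexed by $I$ strictly exceeds every residual entry indexed by $I^C$. Thus any set $D \subseteq [A]$ of size $d \le s = |I|$ that maximizes $\sum_{i\in D}|A\ve{x}-\ve{b}|_i$ must be drawn entirely from $I$, which is precisely the claim $D \subseteq I$. There is no real obstacle here; the only subtlety worth flagging is the use of the unit-norm normalization of the rows of $A$ to remove $\|\ve{a}_j\|$ from the bounds, and the appeal to the definition $\epsilon^* = \min_{i\in I}|\epsilon_i|$ to pass from the pointwise corruption magnitude $|\epsilon_j|$ to the uniform bound $\epsilon^*$.
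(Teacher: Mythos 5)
Your proof is correct and follows essentially the same route as the paper's: both establish that under $\|\ve{x}-\ve{x}^*\| < \tfrac{1}{2}\epsilon^*$ every residual entry indexed by $I$ strictly exceeds $\tfrac{1}{2}\epsilon^*$ while every entry indexed by $I^C$ is strictly below it, and then conclude that the $d \le s$ largest entries lie in $I$. The only difference is cosmetic: you phrase the two bounds algebraically via Cauchy--Schwarz and the reverse triangle inequality on $|\epsilon_j - \ve{a}_j^T(\ve{x}-\ve{x}^*)|$, whereas the paper phrases the same estimates geometrically as distances to the hyperplanes $H_i$ and their orthogonal projections.
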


\begin{lemma}\label{lem:probbound}
Let $0 < \delta < 1$.  Define $$k^* = \max\Bigg(0, \Bigg\lceil \frac{\log\Big(\frac{\delta(\epsilon^*)^2}{4\|\ve{x}^*\|^2}\Big)}{\log\Big(1 - \frac{\sigma_{\min}^2(A_*)}{m-s}\Big)} \Bigg\rceil\Bigg).$$  Then in round $i$
of Method \ref{WKwoR} or Method \ref{WKwoRUS}, the iterate produced by the RK iterations, $\ve{x}_{k^*}^i$ satisfies 
\begin{equation}\label{probbb}
\mathbb{P}\Big[\|\ve{x}_{k^*}^i - \ve{x}^*\| \le \frac{1}{2}\epsilon^*\Big] \ge (1-\delta)\Big(\frac{m-s}{m}\Big)^{k^*}.
\end{equation}
\end{lemma}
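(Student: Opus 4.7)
The plan is to condition on the event $E$ that none of the $k^*$ row indices $i_1,\ldots,i_{k^*}$ sampled by the inner RK call in round $i$ falls in the corrupted index set $I$. Because the rows of $A$ are unit norm, each $i_j$ is drawn independently and uniformly from $[m]$, so $\mathbb{P}[E] = ((m-s)/m)^{k^*}$, and, conditional on $E$, each $i_j$ is uniform on $I^C$. The key observation is that whenever $i_j \in I^C$ we have $b_{i_j} = b^*_{i_j}$, so under $E$ the conditional law of the RK iterates is exactly the law of $k^*$ steps of Randomized Kaczmarz run on the consistent subsystem $A_* \ve{x} = \ve{b}_*$ with solution $\ve{x}^*$ (whose rows are also unit norm, so the uniform-on-$I^C$ sampling coincides with the RK sampling distribution $\|\ve{a}\|^2/\|A_*\|_F^2$ for $A_*$).

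Given this coupling, Theorem \ref{strohmervershynin} applied to $A_*$ with $\ve{x}_0 = \ve{0}$ gives
$$\mathbb{E}\bigl[\|\ve{x}_{k^*}^i - \ve{x}^*\|^2 \,\bigm|\, E\bigr] \;\le\; \Bigl(1 - \tfrac{\sigma_{\min}^2(A_*)}{\|A_*\|_F^2}\Bigr)^{k^*}\|\ve{x}^*\|^2 \;=\; \Bigl(1 - \tfrac{\sigma_{\min}^2(A_*)}{m-s}\Bigr)^{k^*}\|\ve{x}^*\|^2,$$
using $\|A_*\|_F^2 = m-s$. Markov's inequality applied to the nonnegative random variable $\|\ve{x}_{k^*}^i - \ve{x}^*\|^2$ at level $(\epsilon^*/2)^2$ then yields
$$\mathbb{P}\Bigl[\|\ve{x}_{k^*}^i - \ve{x}^*\| > \tfrac{1}{2}\epsilon^* \,\bigm|\, E\Bigr] \;\le\; \frac{4\|\ve{x}^*\|^2}{(\epsilon^*)^2}\Bigl(1 - \tfrac{\sigma_{\min}^2(A_*)}{m-s}\Bigr)^{k^*}.$$

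The value of $k^*$ in the statement is engineered precisely so that this right-hand side is at most $\delta$: requiring $(1 - \sigma_{\min}^2(A_*)/(m-s))^{k^*} \le \delta(\epsilon^*)^2/(4\|\ve{x}^*\|^2)$ and taking logarithms (noting the base lies in $(0,1)$, so the log is negative and the direction of the inequality flips on division) reproduces the ceiling formula; the outer $\max$ with $0$ only handles the degenerate regime $\delta(\epsilon^*)^2/(4\|\ve{x}^*\|^2) \ge 1$, in which the bound already holds at $k^* = 0$. Combining this conditional estimate with $\mathbb{P}[E] = ((m-s)/m)^{k^*}$ via $\mathbb{P}[A] \ge \mathbb{P}[A \mid E]\,\mathbb{P}[E]$ produces \eqref{probbb}. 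The only step requiring care is the first one, namely verifying that conditioning on $E$ does not distort the RK dynamics but rather yields an honest RK run on $A_*$; this rests on the independence of row draws and on the unit-norm normalization, and everything else is a routine Markov argument.
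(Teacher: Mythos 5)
Your proposal is correct and follows essentially the same route as the paper's own proof: condition on the event $E$ that no corrupted row is sampled in the $k^*$ iterations, identify the conditional dynamics with an RK run on the consistent subsystem $A_*\ve{x}=\ve{b}_*$, apply Theorem \ref{strohmervershynin}, invoke the conditional Markov inequality, and multiply by $\mathbb{P}[E]$. Your added care in justifying the coupling step (uniformity of the conditional row draws on $I^C$ and the identity $b_{i_j}=b^*_{i_j}$ under $E$) is a welcome elaboration of a point the paper asserts without detail, but it does not change the argument.
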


First, note that we must restrict $k^*$ to be nonnegative; since $\log\bigg(1 - \frac{\sigma_{\min}^2(A_*)}{m-s}\bigg)$ is negative, if $\log\bigg(\frac{\delta(\epsilon^*)^2}{4\|\ve{x}^*\|^2}\bigg)$ is positive, we must define $k^* = 0$.  However, this corresponds to the situation in which $\epsilon^* > 2 \|\ve{x}^*\|$ and the initial iterate $\ve{x}_0 = \ve{0}$ is within the detection horizon. 
Additionally, note that $k^*$ depends upon $\delta$, so one is not able to make this probability as large as one likes.  As $\delta$ decreases, $k^*$ increases, so the right hand side of \eqref{probbb} is bounded away from $1$.  In Figure \ref{fig:gaussian}, we plot $k^*$ and $(1-\delta)\Big(\frac{m-s}{m}\Big)^{k^*}$ for Gaussian systems with various number of corruptions. In the plots, we see that the value of $\delta$ which maximizes this probability depends upon $s$.  Determining this maximizing $\delta$ was not computable in closed form.  Additionally, we point out that the empirical behavior of the method does not appear to depend upon $\delta$; we believe this is an artifact of our proof.

\begin{theorem}\label{thm:d=Iwindowedprob}
Let $0 < \delta < 1$.  Suppose $d \ge s$, $W \le \lfloor\frac{m-n}{d}\rfloor$ and $k^*$ is as given in Lemma \ref{lem:probbound}.  Then Method \ref{WKwoR} on $A,\ve{b}$ will detect the corrupted equations ($I \subset S$) and the remaining equations given by $A_{[m]-S}, \ve{b}_{[m]-S}$ will have solution $\ve{x}^*$ with probability at least $$1 - \bigg[1 - (1-\delta)\Big(\frac{m-s}{m}\Big)^{k^*}\bigg]^{W}.$$
\end{theorem}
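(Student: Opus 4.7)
The plan is to reduce the theorem to a standard independent-trials argument by chaining Lemmas \ref{lem:detectionhorizon} and \ref{lem:probbound}. First, I would characterize what it means for a single round to ``succeed.'' Suppose round $i$ of Method \ref{WKwoR} returns an iterate $\ve{x}_{k^*}^i$ with $\|\ve{x}_{k^*}^i - \ve{x}^*\| < \tfrac{1}{2}\epsilon^*$. Applying Lemma \ref{lem:detectionhorizon} with $|D|=s$ (which is allowed since $s \le s$) shows that the $s$ largest-magnitude residual entries of $|A\ve{x}_{k^*}^i - \ve{b}|$ are contained in $I$; since $|I|=s$, they equal $I$. Because the method selects the top $d$ residual indices and $d \ge s$, the resulting set $D$ in line 5 must contain all of $I$. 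Hence $I \subset D \subset S$ at the end of round $i$, and since $S$ only grows over rounds, $I \subset S$ at termination.

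Second, by Lemma \ref{lem:probbound}, the probability that round $i$ produces an iterate inside the detection horizon is at least
\[
p := (1-\delta)\left(\frac{m-s}{m}\right)^{k^*}.
\]
Crucially, in Method \ref{WKwoR} the inputs $A$ and $\ve{b}$ are not modified between rounds (in contrast with Method \ref{WKwR}); every round reinitializes at $\ve{x}_0=\ve{0}$ and draws fresh independent samples for the RK indices. Thus the $W$ events ``round $i$ lands in the detection horizon'' are mutually independent, each of probability at least $p$. The probability that every round fails is at most $(1-p)^W$, so the probability that at least one round succeeds, and hence that $I \subset S$, is at least
\[
1 - \bigl[1-(1-\delta)\bigl(\tfrac{m-s}{m}\bigr)^{k^*}\bigr]^{W}.
\]

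Finally, I would verify the second half of the conclusion, namely that the remaining subsystem $A_{[m]-S}\ve{x}=\ve{b}_{[m]-S}$ is solved by $\ve{x}^*$. Since $I\subset S$, every equation left in the trimmed system is uncorrupted, so $\ve{x}^*$ satisfies it. The hypothesis $W \le \lfloor (m-n)/d \rfloor$ bounds $|S| \le dW \le m-n$, leaving at least $n$ rows of $A$; under the general-position assumption noted before Method \ref{alg:RK}, $A_{[m]-S}$ has rank $n$ and thus $\ve{x}^*$ is the unique solution.

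The main obstacle is mostly bookkeeping: reconciling the detection-horizon lemma (stated for $d \le s$) with the theorem's assumption $d \ge s$, which is handled by invoking the lemma at the borderline $d=s$ to pin down $I$ exactly before appealing to $d\ge s$, and confirming that the independence across rounds is genuine for Method \ref{WKwoR} (as opposed to Method \ref{WKwR}, where row removal within the loop would couple rounds and require more care).
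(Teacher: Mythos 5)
Your proposal is correct and follows essentially the same route as the paper's proof: a single round landing in the detection horizon of Lemma \ref{lem:detectionhorizon} suffices when $d \ge s$, the rounds of Method \ref{WKwoR} are independent, and the failure probability is bounded by $(1-p)^W$ with $p$ from Lemma \ref{lem:probbound}. Your extra bookkeeping (invoking the detection lemma at $d=s$ to pin down $I$ before using $d\ge s$, and checking that $|S|\le m-n$ leaves a full-rank subsystem) is sound and in fact slightly more careful than the paper's own argument.
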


In Figure \ref{fig:gaussian}, we plot $1 - \bigg[1 - (1-\delta)\Big(\frac{m-s}{m}\Big)^{k^*}\bigg]^{W}$ for corrupted Gaussian systems and choices of $\delta$.  Here $W = \lfloor (m-n)/d \rfloor$ and $d = s$.  Again, we reiterate that we believe the dependence upon $\delta$ is an artifact of the proof of Lemma \ref{lem:probbound}.  Substituting e.g., $\delta = 0.5$ in probability bounds gives a value not far from its maximum for all systems we studied; see Figures \ref{fig:gaussian} and \ref{fig:correlated}.

\begin{theorem}\label{thm:dsmallerthanIwindowedprob}
Let $0 < \delta < 1$.  Suppose $d \ge 1$, $W \le \lfloor \frac{m-n}{d} \rfloor$ and $k^*$ is as given in Lemma \ref{lem:probbound}.  Then Method \ref{WKwoRUS} on $A,\ve{b}$ will detect the corrupted equations ($I \subset S$) and the remaining equations given by $A_{[m]-S}, \ve{b}_{[m]-S}$ will have solution $\ve{x}^*$ with probability at least $$1 - \sum_{j=0}^{\lceil s/d \rceil - 1} {W \choose j} p^j (1-p)^{W-j}$$ where $p = (1-\delta)\Big(\frac{m-s}{m}\Big)^{k^*}$.
\end{theorem}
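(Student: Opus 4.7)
The plan is to mirror the structure of the proof of Theorem \ref{thm:d=Iwindowedprob}, but to account for the fact that, since $d$ may now be smaller than $s$, no single round can capture all of $I$; instead multiple successful rounds must cumulatively accumulate the corrupted indices in $S$.

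First, I would declare round $i$ \emph{successful} if the iterate $\ve{x}_{k^*}^i$ produced by the RK subroutine satisfies $\|\ve{x}_{k^*}^i - \ve{x}^*\| \le \tfrac{1}{2}\epsilon^*$. Because the RK calls across rounds use independent randomness and each starts from $\ve{0}$, the indicators $X_1,\dots,X_W$ are i.i.d., and Lemma \ref{lem:probbound} gives $\mathbb{P}(X_i = 1) \ge p = (1-\delta)\bigl((m-s)/m\bigr)^{k^*}$.

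The key invariant I would prove inductively is: if $r$ of the first $i$ rounds have been successful, then $|I \cap S| \ge \min(rd, s)$ after round $i$, so $I \subset S$ as soon as $r \ge \lceil s/d \rceil$. To establish this I would invoke Lemma \ref{lem:detectionhorizon} with $d' = s$: in any successful round, the $s$ largest entries of $|A\ve{x}_{k^*}^i - \ve{b}|$ lie in $I$. Writing $\alpha = |I \cap S|$ at the start of such a round and restricting attention to indices in $[A]-S$, the top $s-\alpha$ entries of the residual over $[A]-S$ must be exactly the $s-\alpha$ elements of $I - S$ (every index not among the top $s$ in $[A]$ has a strictly smaller residual). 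Consequently the $d$ selected indices either lie entirely in $I - S$ (when $d \le s-\alpha$) or exhaust $I - S$ and force $I \subset S$ (when $d > s-\alpha$). Unsuccessful rounds may add uncorrupted indices to $S$, but this does no harm because the argument above only references the current value of $S$ at the start of each successful round.

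It then remains to bound the probability of obtaining at least $\lceil s/d \rceil$ successes in $W$ independent trials. Since each $X_i$ stochastically dominates Bernoulli$(p)$, the count $\sum X_i$ stochastically dominates $\text{Bin}(W,p)$, yielding
\begin{equation*}
\mathbb{P}\!\left(\sum_{i=1}^W X_i \ge \lceil s/d \rceil\right) \;\ge\; 1 - \sum_{j=0}^{\lceil s/d \rceil - 1} \binom{W}{j}\, p^j (1-p)^{W-j},
\end{equation*}
which is the claimed lower bound. The hypothesis $W \le \lfloor (m-n)/d \rfloor$ ensures $|S| \le Wd \le m-n$, so at least $n$ rows of $A$ remain, and once $I \subset S$ the identity $A_{I^C}\ve{x}^* = \ve{b}_{I^C}$ immediately gives $A_{[m]-S}\ve{x}^* = \ve{b}_{[m]-S}$ (uniqueness being implicit in the general-position assumption noted earlier). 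The main obstacle will be the bookkeeping of the invariant, in particular verifying that mixed successful/unsuccessful rounds still accumulate the correct number of indices of $I$ per success; once the reduction to Lemma \ref{lem:detectionhorizon} applied with $d'=s$ over the pool $[A]-S$ is made precise, the remainder is a routine binomial tail computation.
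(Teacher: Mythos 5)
Your proposal is correct and follows essentially the same route as the paper's proof: declare a round successful when the RK iterate lands within the detection horizon, observe that $\lceil s/d\rceil$ successes suffice under unique selection, and bound the success count from below by a $\mathrm{Bin}(W,p)$ tail. Your explicit inductive invariant $|I\cap S|\ge\min(rd,s)$ (using the strict separation of residuals from Lemma \ref{lem:detectionhorizon} restricted to the pool $[A]-S$) is a detail the paper's one-line justification leaves implicit, but it is the same underlying argument.
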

We are not able to prove a result similar to Theorem \ref{thm:d=Iwindowedprob} or Theorem \ref{thm:dsmallerthanIwindowedprob} for Method \ref{WKwR} due to the fact that rounds of this method are not independent because one removes equations after each round.

In Figure \ref{fig:WKwoRUS}, we plot $1 - \sum_{j=0}^{\lceil s/d \rceil - 1} {W \choose j} p^j (1-p)^{W-j}$ for corrupted Gaussian systems and choices of $\delta$.  Here $W = 2$, $d = \lceil s/2 \rceil$, and $k^*$ is as given in Lemma \ref{lem:probbound}.  We believe that the dependence upon $\delta$ is an artifact of our proof.  Evidence suggesting this is seen in the middle and right plots of Figure \ref{fig:WKwoRUS}, as the empirical behavior of Method \ref{WKwoRUS} does not appear to depend upon $\delta$.

These bounds on the probability of successfully detecting all corrupted equations in one round, while provable and nonzero, are pessimistic and do not resemble the experimental rate of success for any systems we studied; see Figures \ref{fig:gaussian} and \ref{fig:correlated}.  A tighter bound on the rate of convergence for particular systems could provide a tighter lower bound on this probability.

\section{Experimental Results}\label{sec:experiments}

We are only able to prove theoretical results when the rounds of Methods \ref{WKwoR} and \ref{WKwoRUS} are independent and for the specified values of $k^*$ and $d$.  However, in practice, these methods perform well for different values of $k$ and $d$, and Method \ref{WKwR} can be quite successful.  In this section, we present experimental results demonstrating the performance of these methods, for various choices of $d$ and $k$, on Gaussian, correlated, and real systems.

We plot our theoretical bounds as well as comparable empirical measures, which we denote `success rates.'  Note that Theorem \ref{thm:d=Iwindowedprob} provides a bound on the probability that in \emph{one} of the rounds of Methods \ref{WKwoR} we successfully detect \emph{all} of the corrupted equations.  For this reason, in Figures \ref{fig:gaussian} and \ref{fig:correlated}, we plot the empirical rate at which our method selects all of the $s$ corrupted equations in one of the $W$ rounds over 100 trials.  However, in Figures \ref{fig:gaussiantotal} and \ref{fig:correlatedtotal}, we plot the rates at which all of the $s$ corrupted equations are selected over \emph{all} of the $W$ rounds over 100 trials, which is a more practical measure of success.  However, Theorem \ref{thm:dsmallerthanIwindowedprob} presents a bound on the probability that all of the $s$ corrupted equations are selected after \emph{all} of the $W$ rounds in Method \ref{WKwoRUS}.  Figure \ref{fig:WKwoRUS} plots this bound alongside the corresponding empirical rate.  The measure of success plotted in each figure is defined in figure caption.

\subsection{Random Data Experiments}

\begin{figure}
		\includegraphics[width=0.33\textwidth]{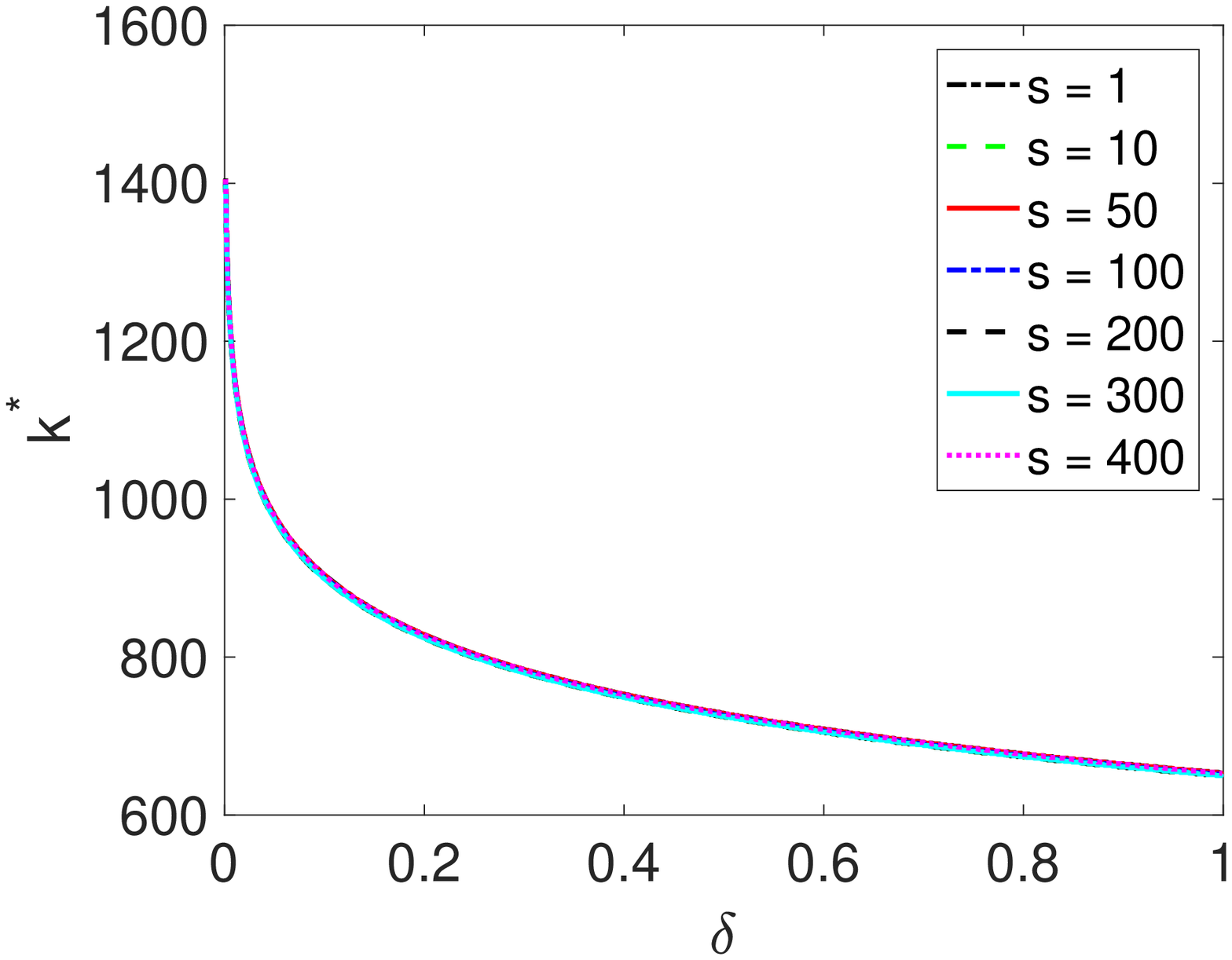} \includegraphics[width=0.33\textwidth]{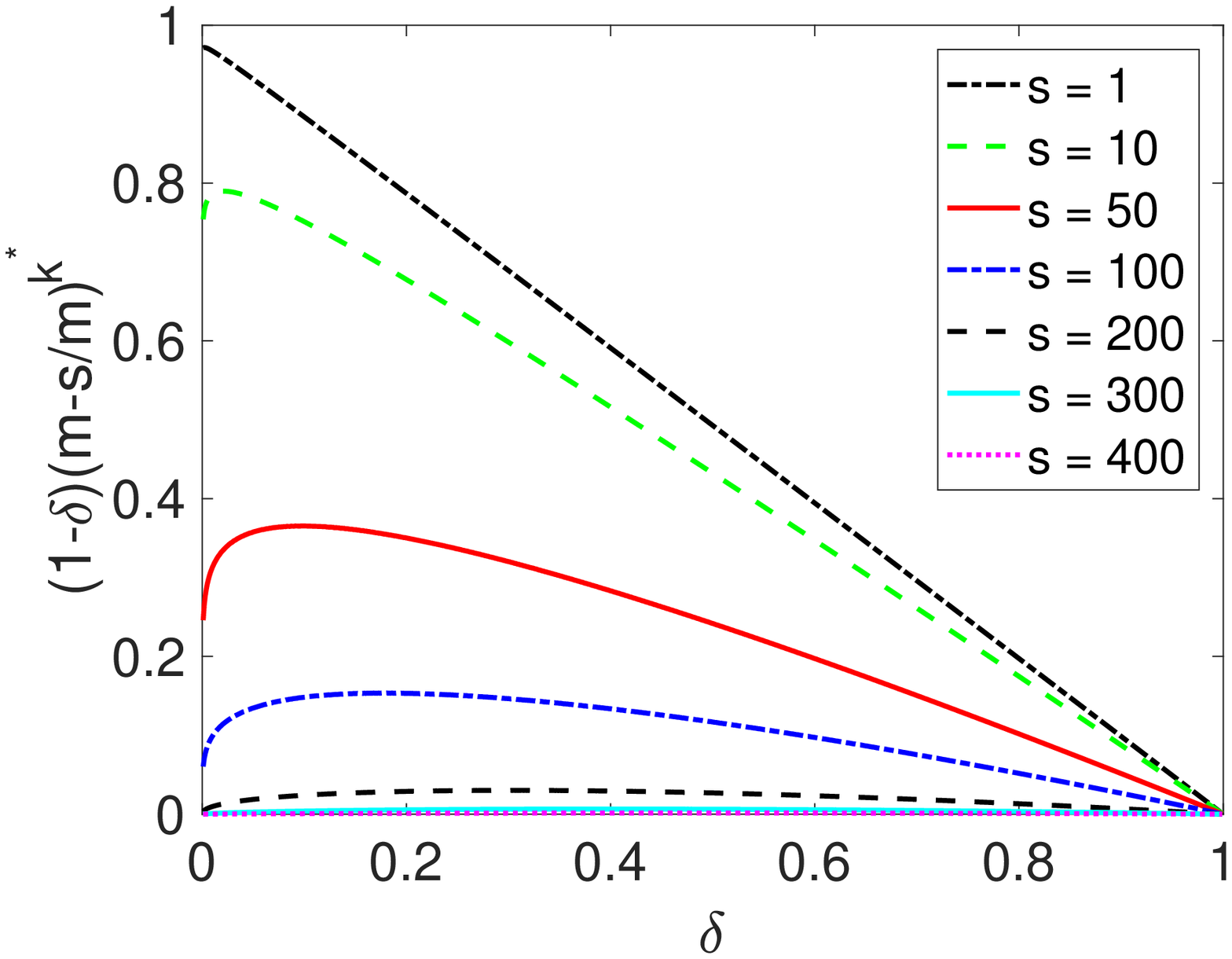}
		\includegraphics[width=0.33\textwidth]{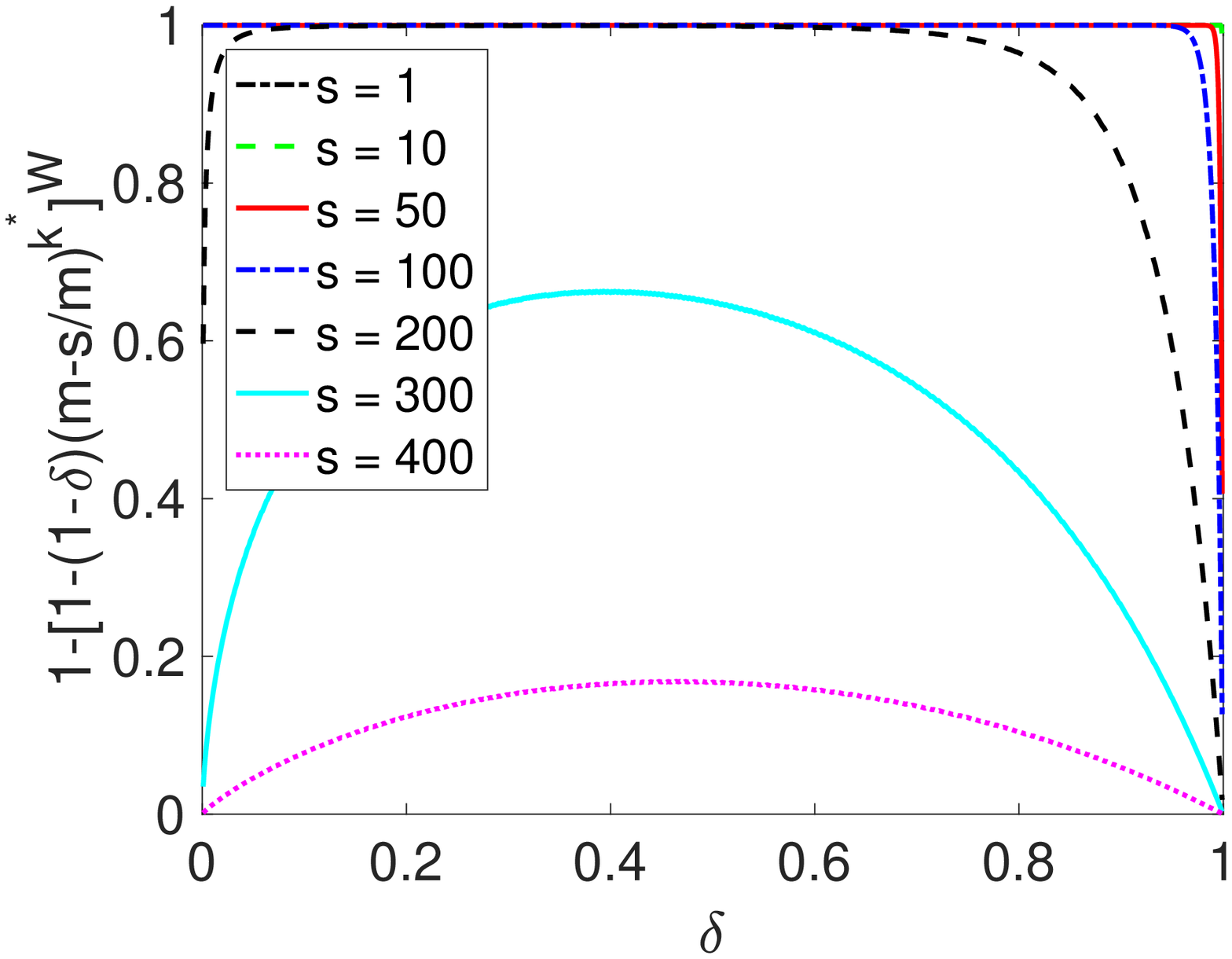} \\\includegraphics[width=0.49\textwidth]{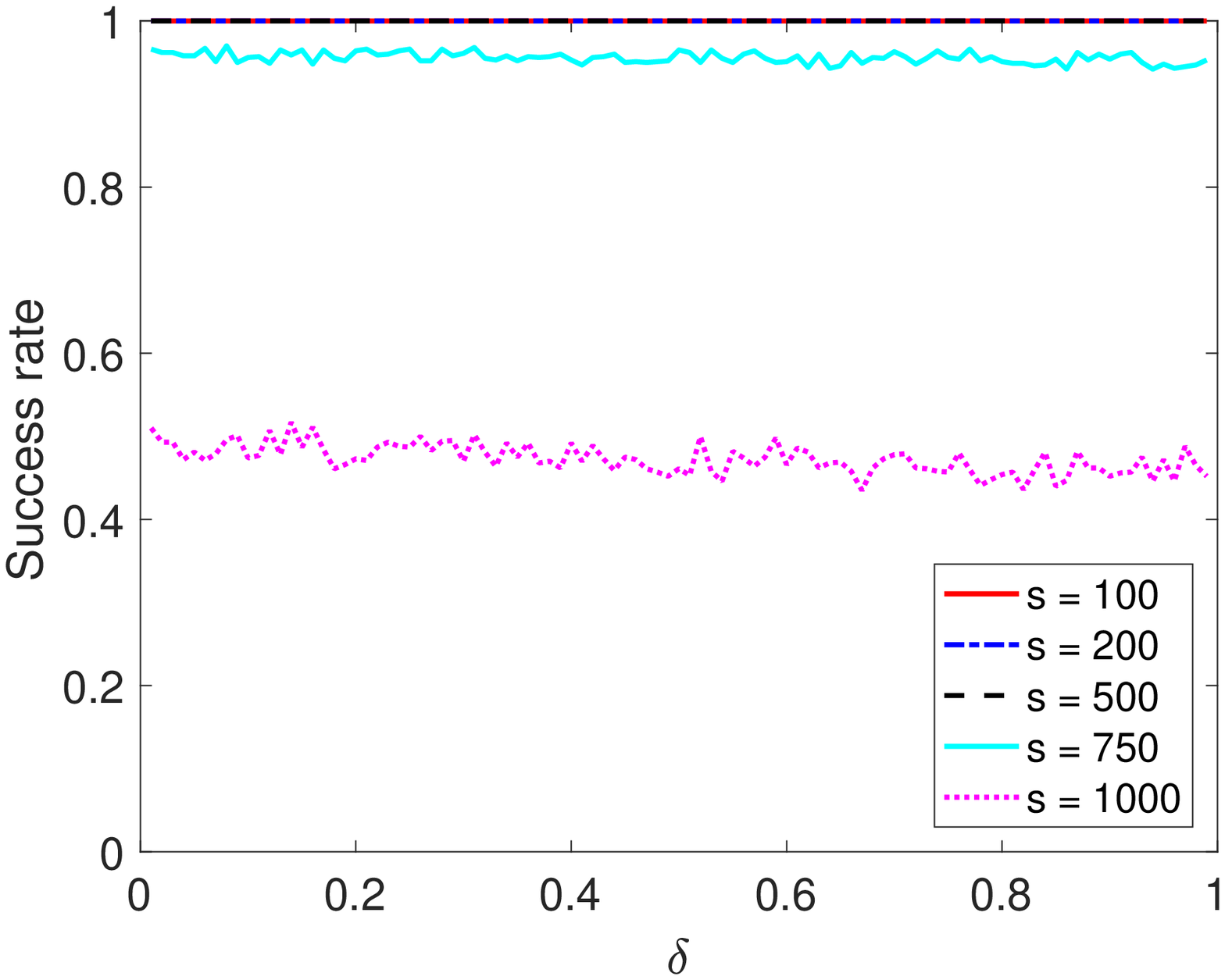}
		\includegraphics[width=0.49\textwidth]{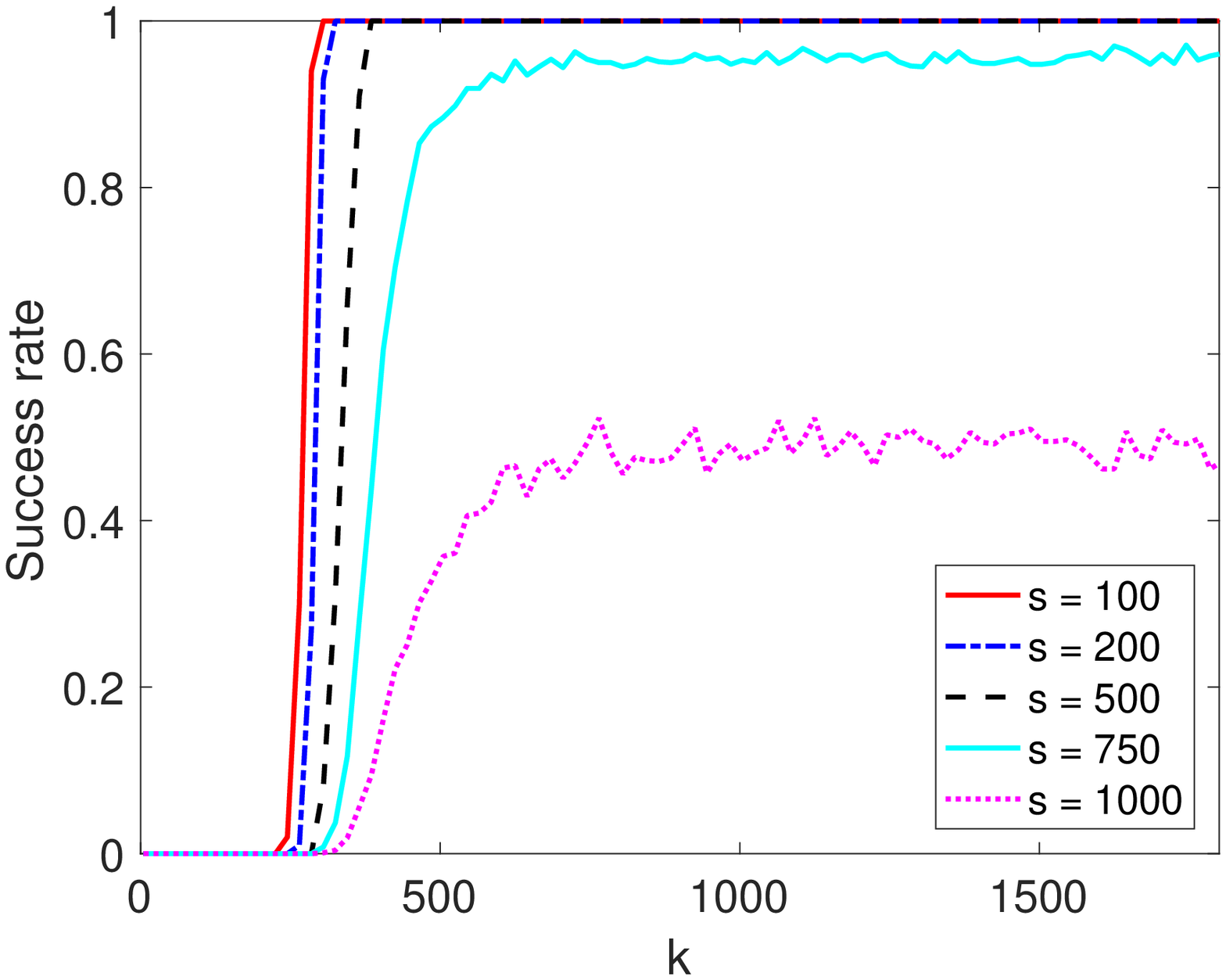}
		\caption{Plots for Method \ref{WKwoR} on $50000 \times 100$ Gaussian system (normalized) with various number of corrupted equations, $s$.  Upper left: $k^*$ as given in Lemma \ref{lem:probbound}; upper middle: Lower bound on probability of successfully detecting all corrupted equations in single round as given in Lemma \ref{lem:probbound}; upper right: Lower bound on probability of successfully detecting all corrupted equations in one round out of $W = \lfloor \frac{m-n}{s} \rfloor$ rounds as given in Theorem \ref{thm:d=Iwindowedprob};  lower left: Experimental ratio of success of detecting all $s$ corrupted equations in one round out of $W = \lfloor \frac{m-n}{s} \rfloor$ for choice of $\delta$; lower right: Experimental ratio of success of detecting all $s$ corrupted equations in one round out of $W = \lfloor \frac{m-n}{s} \rfloor$ for choice of $k$ (number of RK iterations per round).}
		\label{fig:gaussian}
\end{figure}

\begin{figure}
	\begin{center}
		\includegraphics[width=0.49\textwidth]{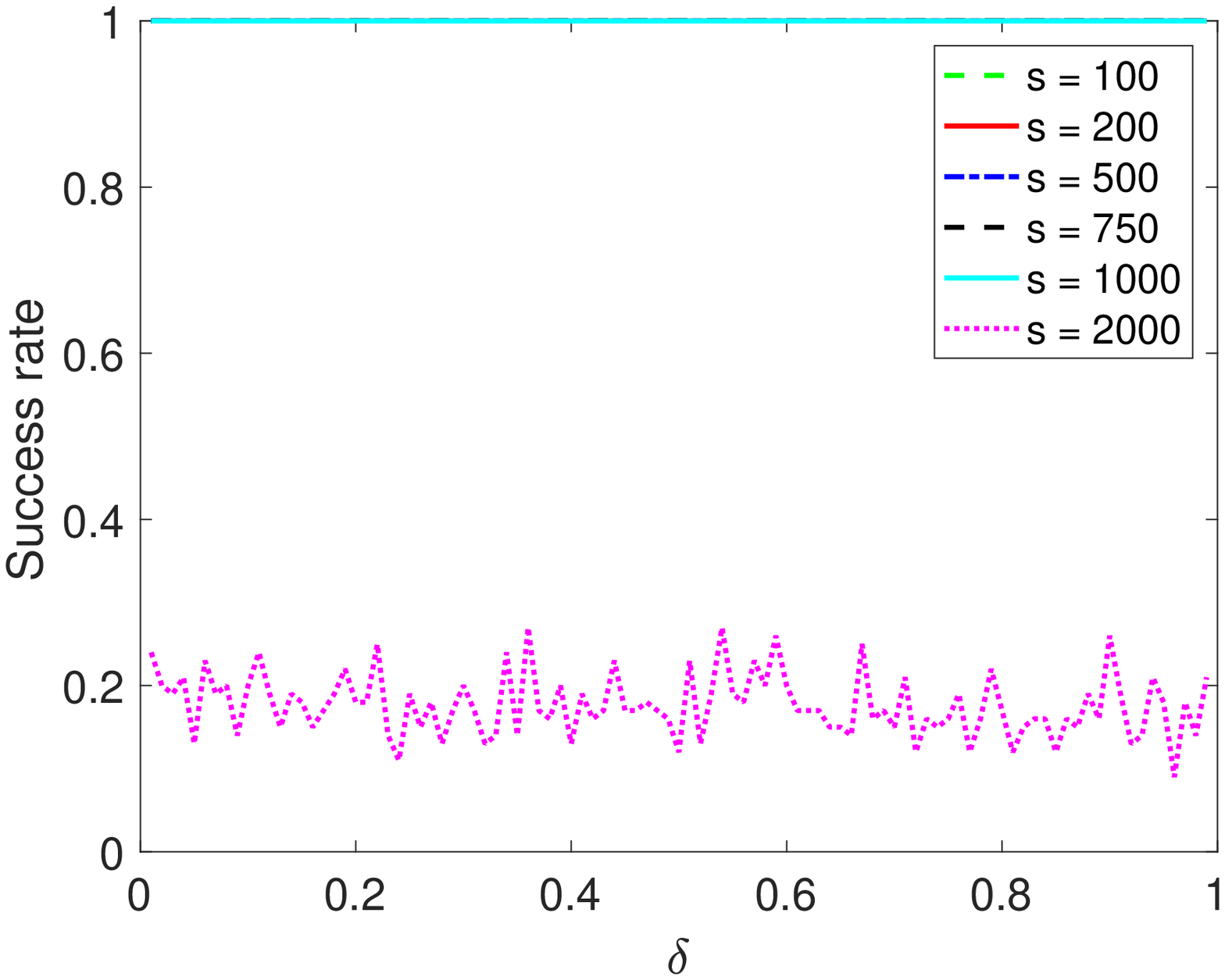} \includegraphics[width=0.49\textwidth]{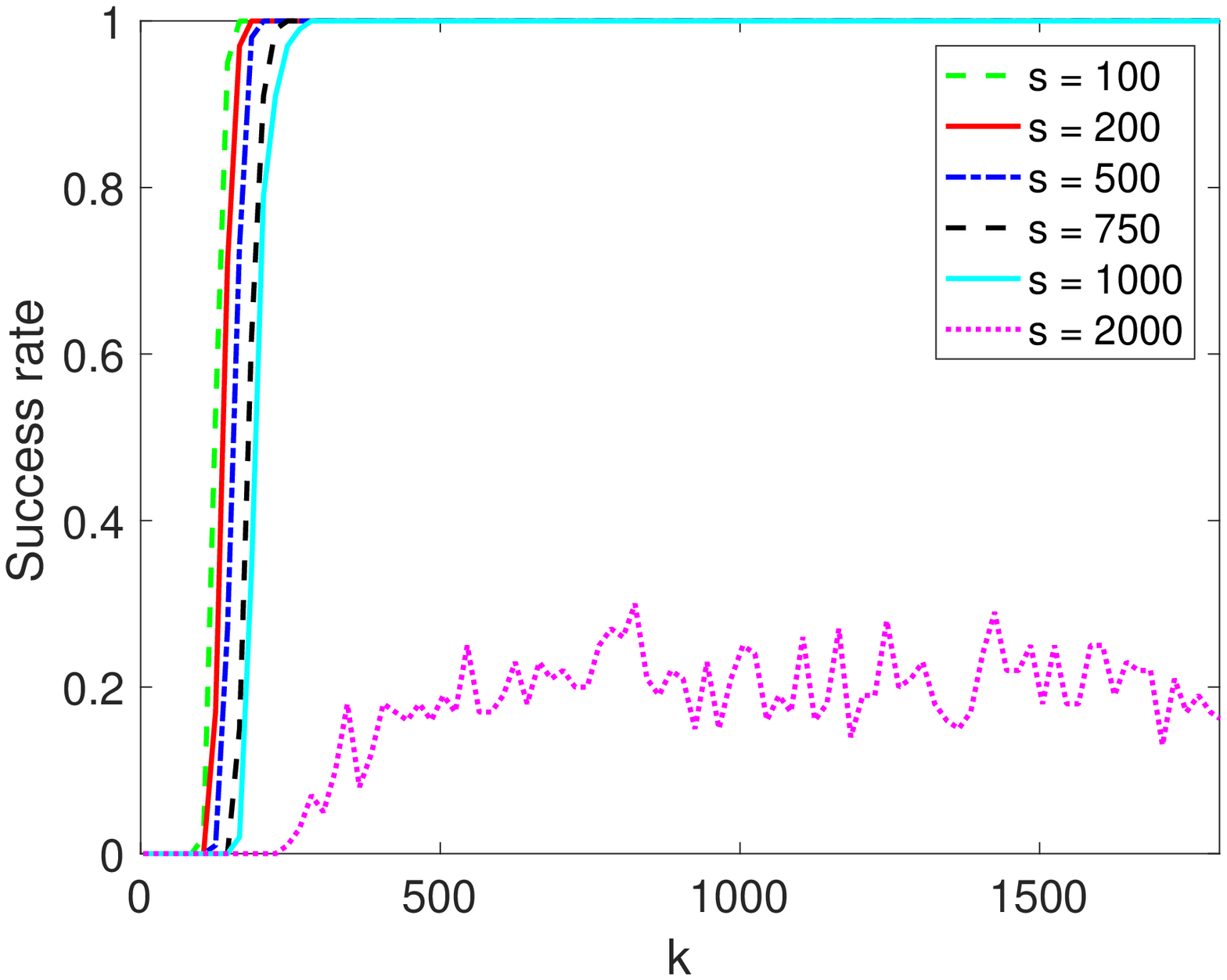}
	\end{center}
	\caption{Plots for Method \ref{WKwoR} on $50000 \times 100$ Gaussian system (normalized) with various number of corrupted equations, $s$. Left: Experimental ratio of successfully detecting all $s$ corrupted equations after all $W = \lfloor \frac{m-n}{s} \rfloor$ rounds for choice of $\delta$; right: Experimental ratio of successfully detecting all $s$ corrupted equations after all $W = \lfloor \frac{m-n}{s} \rfloor$ rounds for choice of $k$ (number of RK iterations per round).}
	\label{fig:gaussiantotal}
\end{figure}

The plots in Figures \ref{fig:gaussian} and \ref{fig:gaussiantotal} are all for Method \ref{WKwoR} on a $50000 \times 100$ Gaussian system defined by $A$ with $a_{ij} \sim \mathcal{N}(0,1)$, then normalized.  The system is corrupted in randomly selected right-hand side entries with random integers in $[1,5]$ so that $\epsilon^* = 1$.  For these plots and experiments, $d=s$.  The upper left image of Figure \ref{fig:gaussian} plots the $k^*$ values defined in Lemma \ref{lem:probbound} for this system, and the upper middle image plots the theoretically guaranteed probability of selecting all $s$ corrupted equations in a single round.  The upper right image of Figure \ref{fig:gaussian} plots the theoretically guaranteed probability of selecting all $s$ corrupted equations in one round out of the $W = \lfloor \frac{m-n}{s}\rfloor$, while the lower left image plots the ratio of successful trials, in which all $s$ corrupted equations were selected in one round of the $W$, out of 100 trials.  The lower right plot of Figure \ref{fig:gaussian} plots how this ratio changes as the number of RK iterations, $k$, in each round varies.  Finally, Figure \ref{fig:gaussiantotal} plots the ratio of successful trials, in which all $s$ corrupted equations were selected after all $W = \lfloor \frac{m-n}{s}\rfloor$ rounds, out of 100 trials as one varies $\delta$ (left) and $k$ (right).  We note that the lower bounds on the probability of successfully detecting all corrupted equations in one round are quite pessimistic; experimentally (in the lower left plot) we see that Method \ref{WKwoR} is able to detect all corruption for much larger numbers of corrupted equations, $s$, than predicted theoretically (in the upper right plot).  Additionally, we note that experimentally, successfully detecting the corrupted equations does not appear to depend upon $\delta$.  For all $0 < \delta < 1$, the $k^*$ value defined in Lemma \ref{lem:probbound} appears to be large enough to guarantee convergence within the detection horizon.

\begin{figure}
	\includegraphics[width=0.32\textwidth]{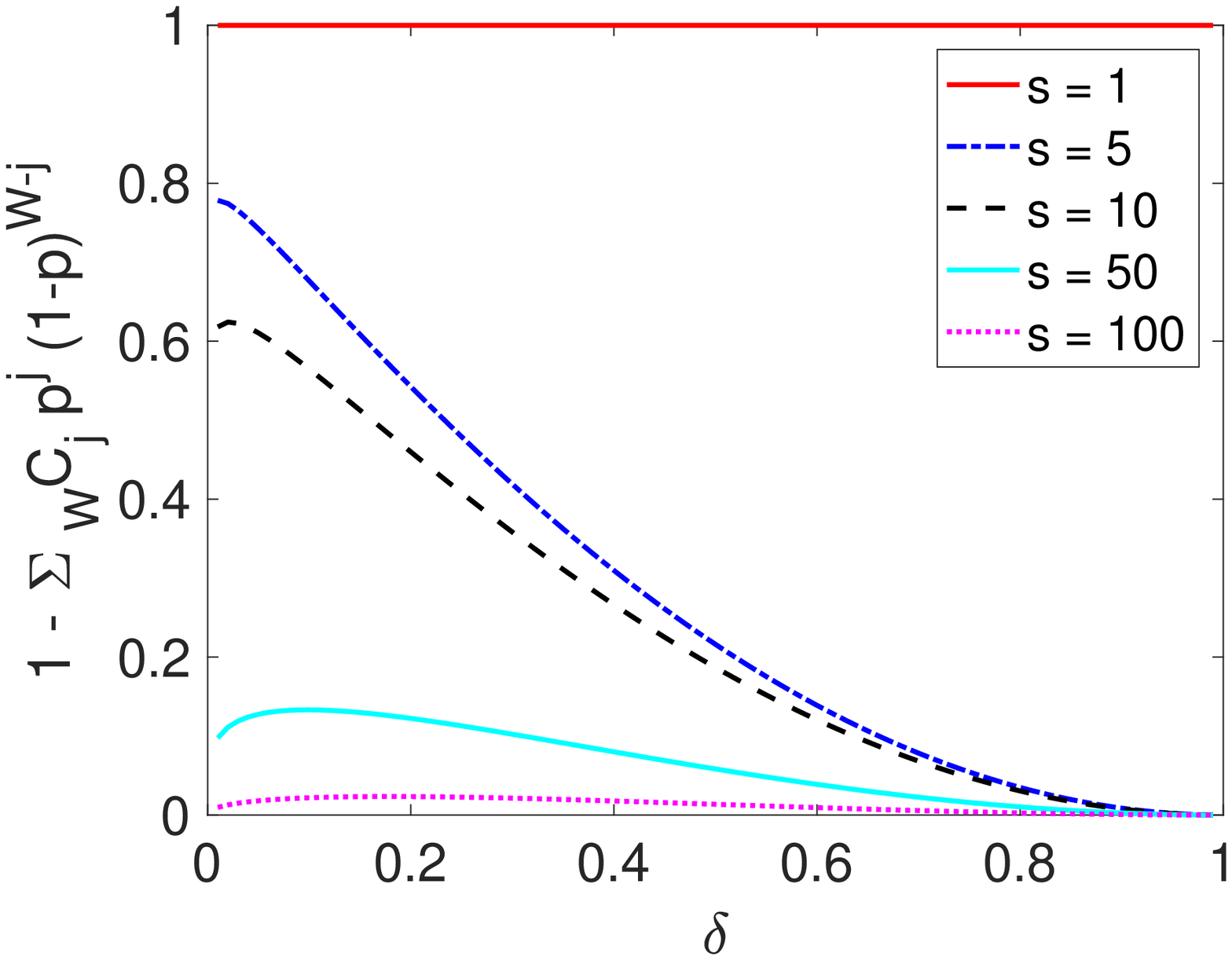}
	\includegraphics[width=0.32\textwidth]{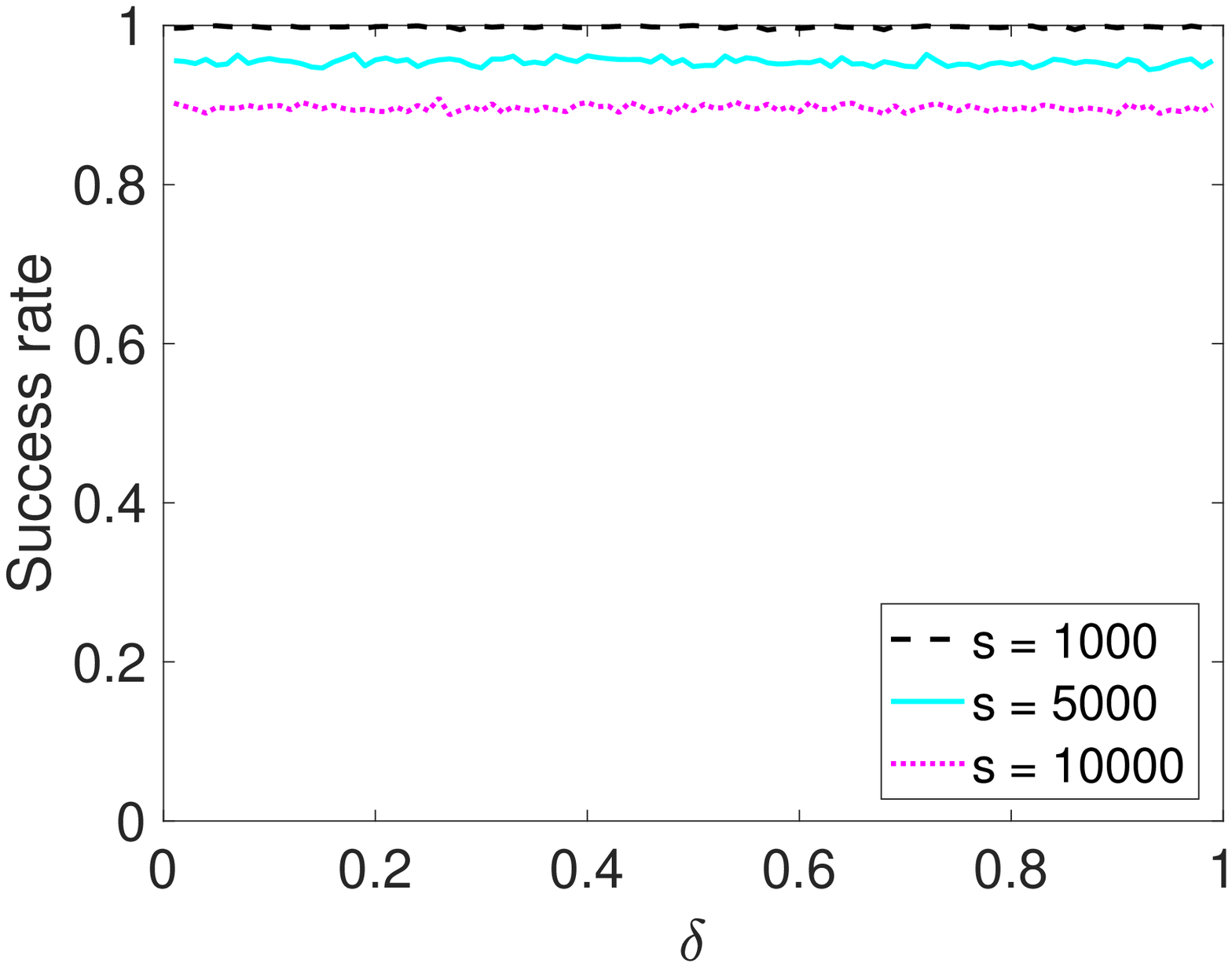}
	\includegraphics[width=0.32\textwidth]{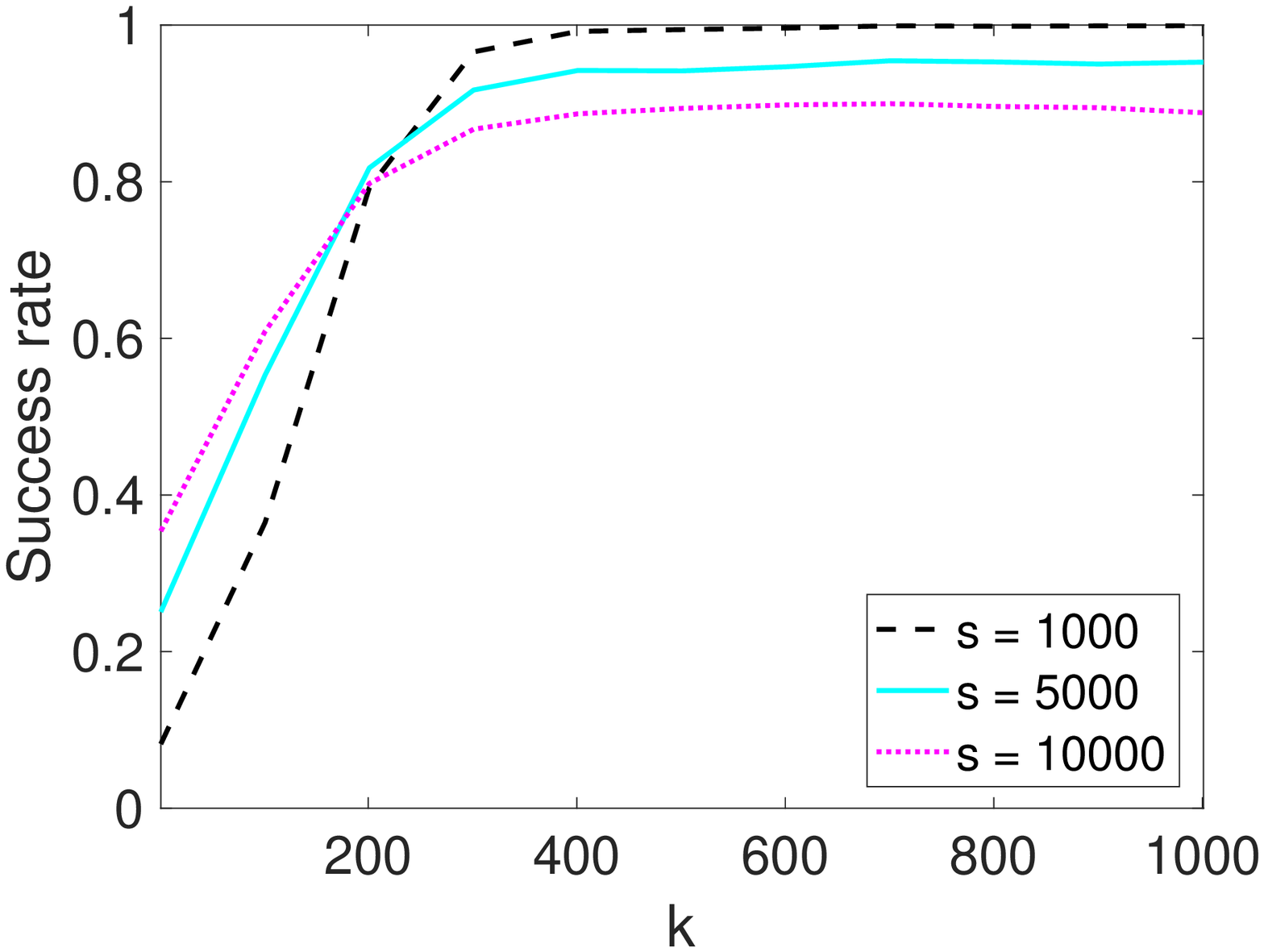}
	\caption{Plots for Method \ref{WKwoRUS} on $50000 \times 100$ Gaussian system (normalized) with various number of corrupted equations, $s$.  Left: Bound given in Theorem \ref{thm:dsmallerthanIwindowedprob} for Method \ref{WKwoRUS} with $W = 2$, $d = \lceil s/2 \rceil$ and $k^*$ as given in Lemma \ref{lem:probbound}; middle: Average fraction of corrupted equations detected after $W = \lceil s/d \rceil$ rounds recording $d = \lceil s/10 \rceil$ equations per round with $k^*$ (as given in Lemma \ref{lem:probbound}) RK iterations per round for varying $\delta$ in 100 trials; right: Average fraction of corrupted equations detected after $W = \lceil s/d \rceil$ rounds recording $d = \lceil s/10 \rceil$ equations per round with varying $k$ (number of RK iterations per round) in 100 trials.}\label{fig:WKwoRUS}
\end{figure}

In Figure \ref{fig:WKwoRUS}, we briefly explore the theoretical guarantees for Method \ref{WKwoRUS} given in Theorem \ref{thm:dsmallerthanIwindowedprob}, and the empirical behavior of this method.  These plots are for a $50000 \times 100$ Gaussian system (normalized) with various number, $s$, of corrupted equations.  We randomly sample $s$ entries of the right hand side vector $b$ and corrupt them by adding $1$, so $\epsilon^* = 1$.  The plot on the left of Figure \ref{fig:WKwoRUS} plots the lower bound on the probability of selecting all corrupted equations given in Theorem \ref{thm:dsmallerthanIwindowedprob} for $W = 2$, $d = \lceil s/2 \rceil$, and $k^*$ as given in Lemma \ref{lem:probbound}.  Meanwhile in the middle and right plots of Figure \ref{fig:WKwoRUS}, we plot the average fraction of corrupted equations recorded for Method \ref{WKwoRUS} with $W = \lceil s/d \rceil$ and $d = \lceil s/10 \rceil$ for 100 trials.  The middle plot has $k^*$ (as given in Lemma \ref{lem:probbound}) RK iterations per round for varying $\delta$, while the right plot has varying $k$ (number of RK iterations per round).  Note that this experiment is different from the others we present in this section as we display the average fraction of corrupted equations recorded over 100 trials, rather than the fraction of trials which detected all corrupted equations.  We note that the theoretical bound plotted on the left of Figure \ref{fig:WKwoRUS} is even more pessimistic than of Method \ref{WKwoR}, but meanwhile the empirical performance of Method \ref{WKwoRUS} plotted in the middle and right of Figure \ref{fig:WKwoRUS} is even better than that seen for Method \ref{WKwoR}.  For this reason, we do not plot these bounds (Theorem \ref{thm:dsmallerthanIwindowedprob}) or the performance of Method \ref{WKwoRUS} for additional systems as we expect the results to trend similarly for other systems.

The figures for Method \ref{WKwoR} mentioned above are recreated for a system whose rows are more correlated ($A \in \R^{50000 \times 100}$ with $a_{ij} \sim \mathcal{N}(1,0.5)$ then normalized) in Figures \ref{fig:correlated} and \ref{fig:correlatedtotal}.  The system is corrupted in randomly selected right-hand side entries with random integers in $[1,5]$ so that $\epsilon^* = 1$.  For these plots and experiments, $d=s$.  The upper left image of Figure \ref{fig:correlated} plots the $k^*$ values defined in Lemma \ref{lem:probbound} for this system, and the upper middle image plots the theoretically guaranteed probability of selecting all $s$ corrupted equations in a single round of Method \ref{WKwoR}.  The upper right image of Figure \ref{fig:correlated} plots the theoretically guaranteed probability of selecting all $s$ corrupted equations in one round out of the $W = \lfloor \frac{m-n}{s}\rfloor$, while the lower left image plots the ratio of successful trials, in which all $s$ corrupted equations were selected in one round of the $W$, out of 100 trials.  The lower right plot of Figure \ref{fig:correlated} plots how this ratio changes as the number of RK iterations, $k$, in each round varies.  Finally, Figure \ref{fig:correlatedtotal} plots the ratio of successful trials, in which all $s$ corrupted equations were selected after all $W$ rounds, out of 100 trials as one varies $\delta$ (left) and $k$ (right).  We note that the discrepancy between the lower bound on the probability of successfully detecting all corrupted equations in one round (upper right plot) and the experimental rate of detecting all corruption (lower left plot) is even larger than in the case of Gaussian systems.  

\begin{figure}
	\includegraphics[width=0.33\textwidth]{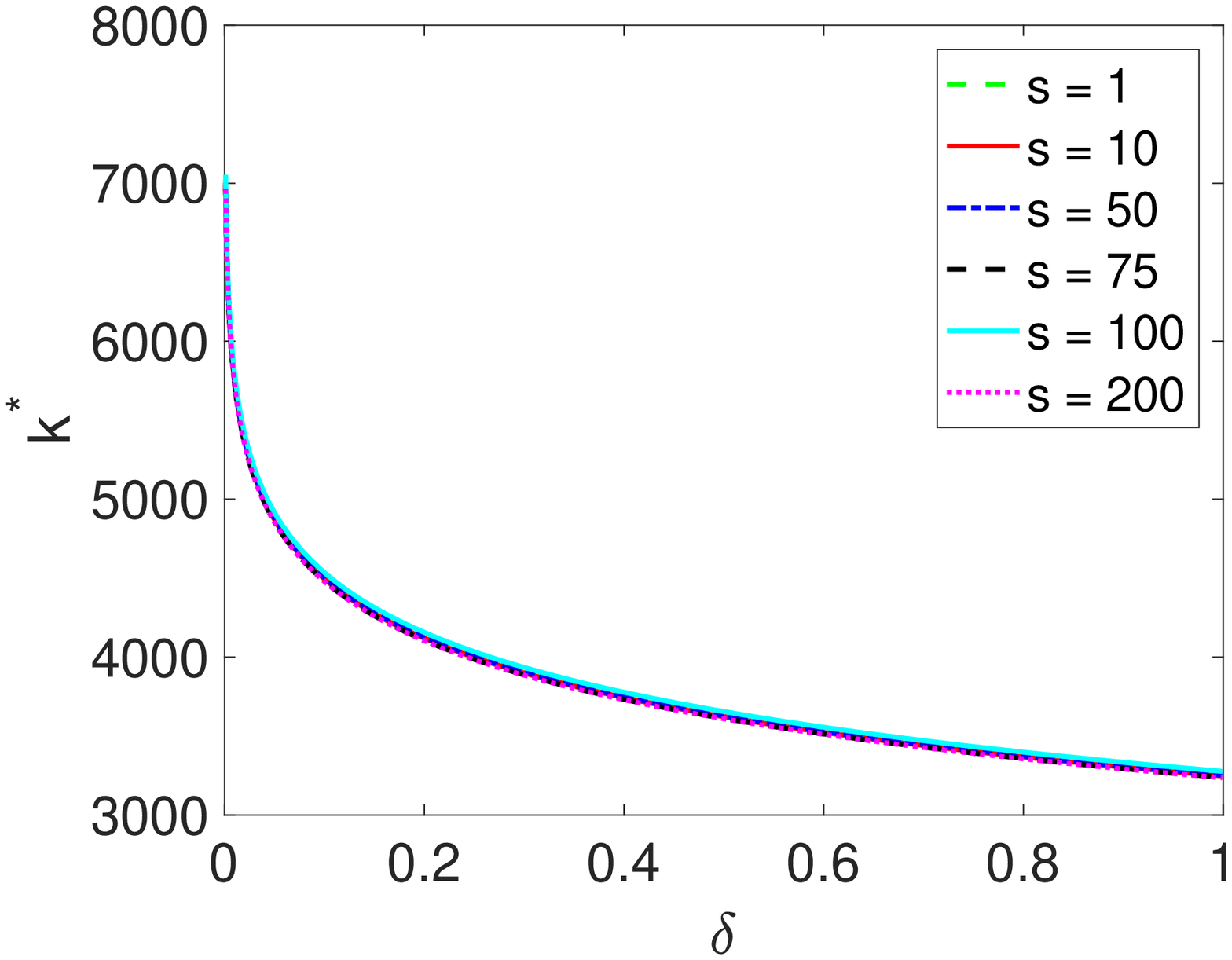} \includegraphics[width=0.33\textwidth]{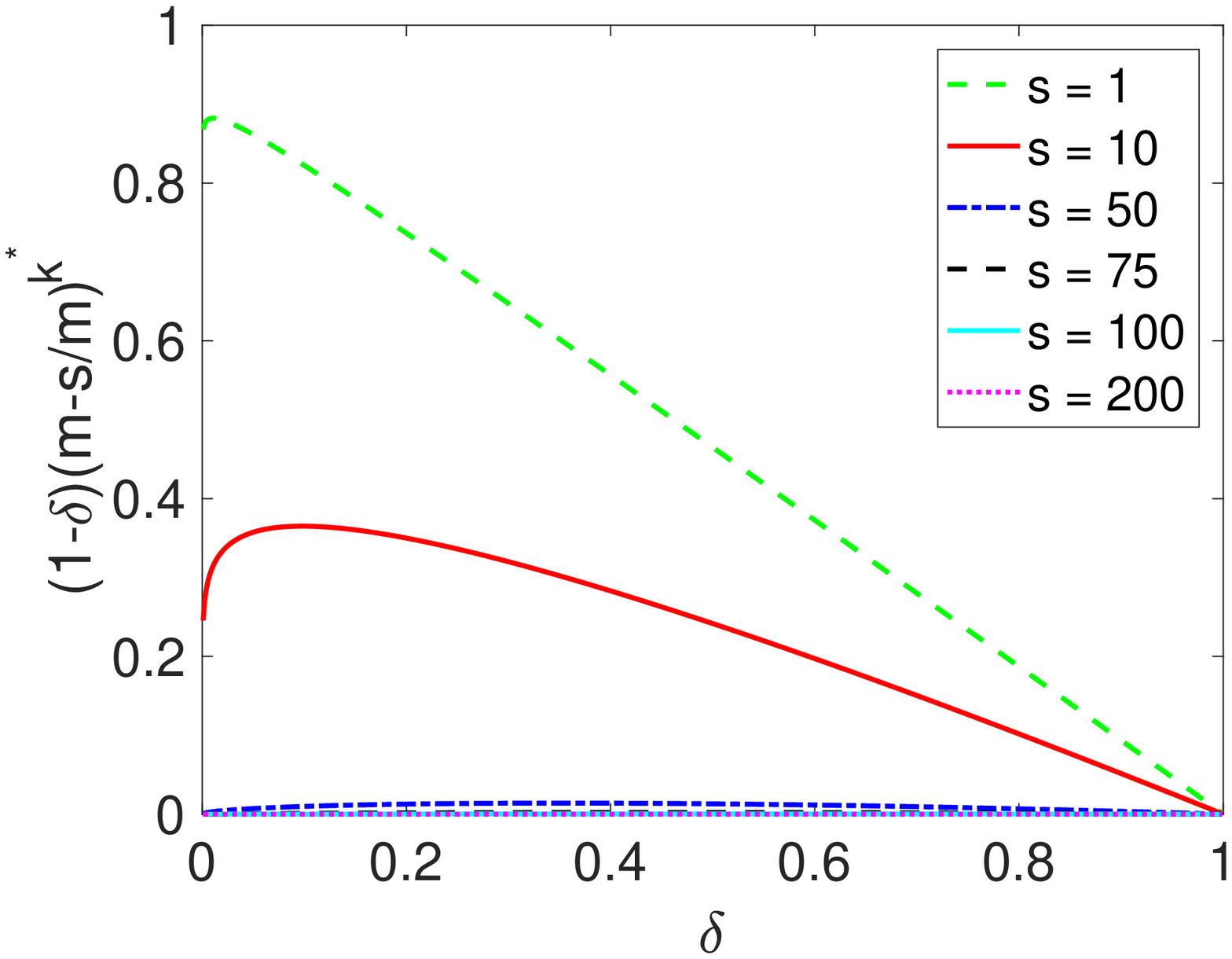}
	\includegraphics[width=0.33\textwidth]{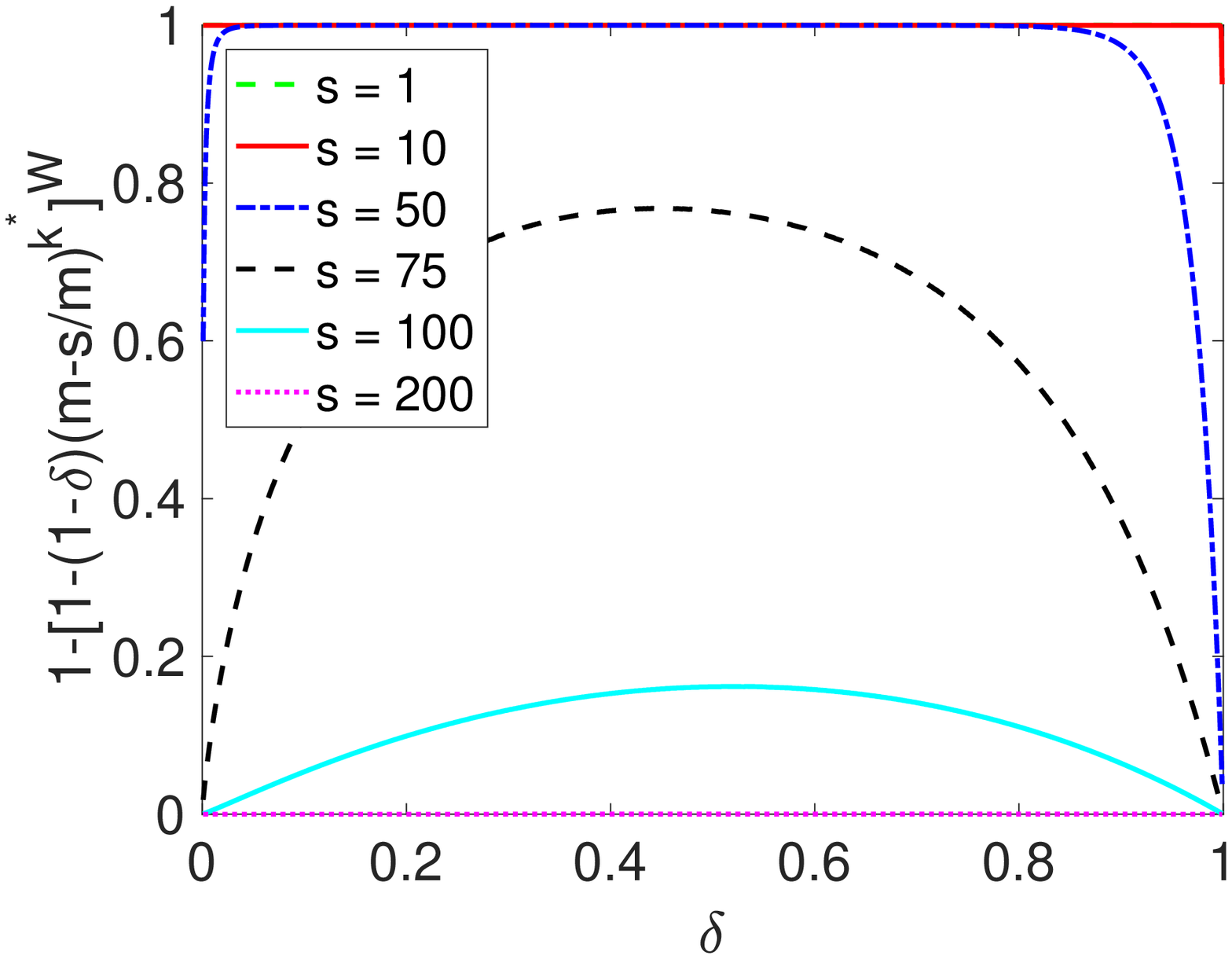} \\\includegraphics[width=0.49\textwidth]{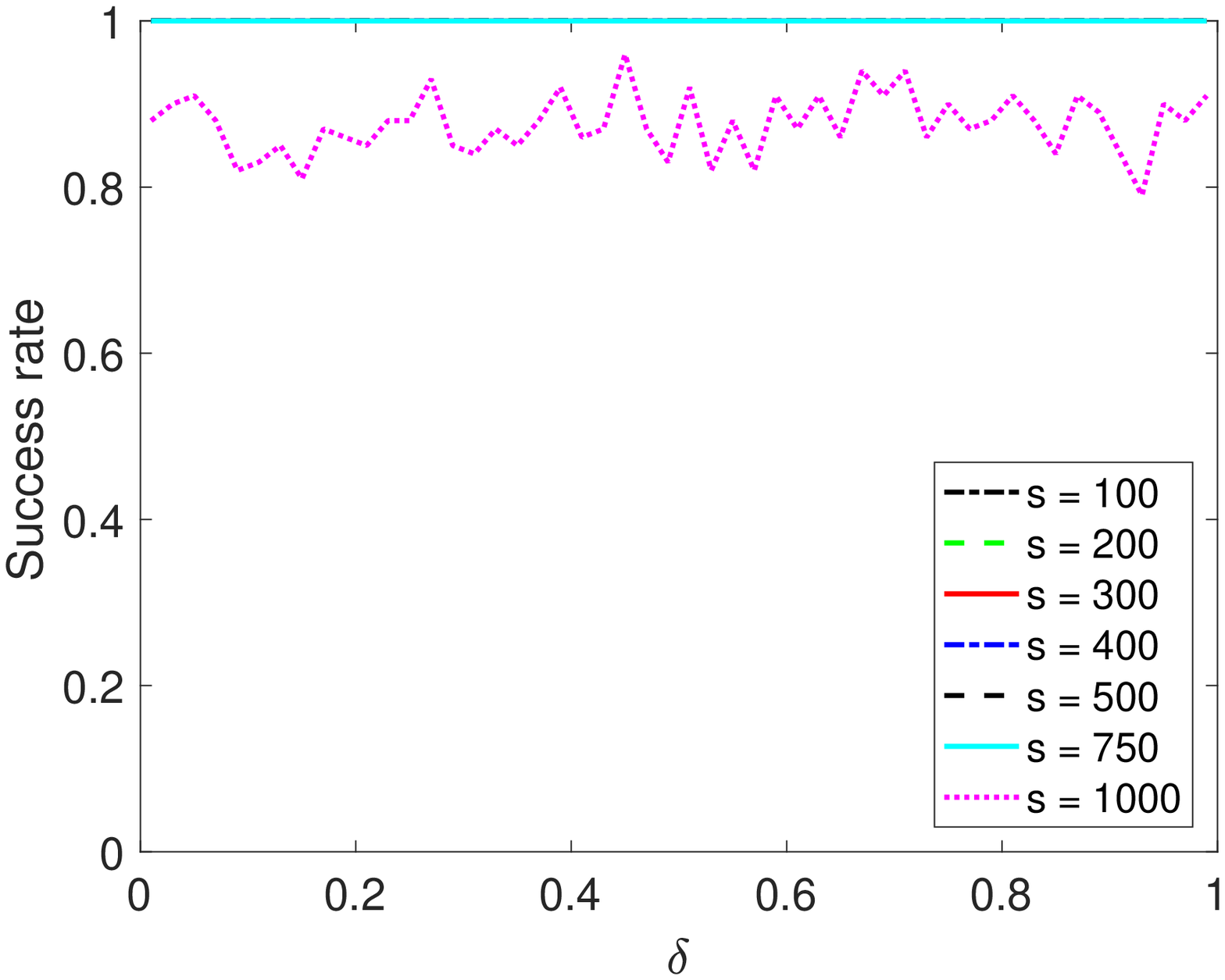}
	\includegraphics[width=0.49\textwidth]{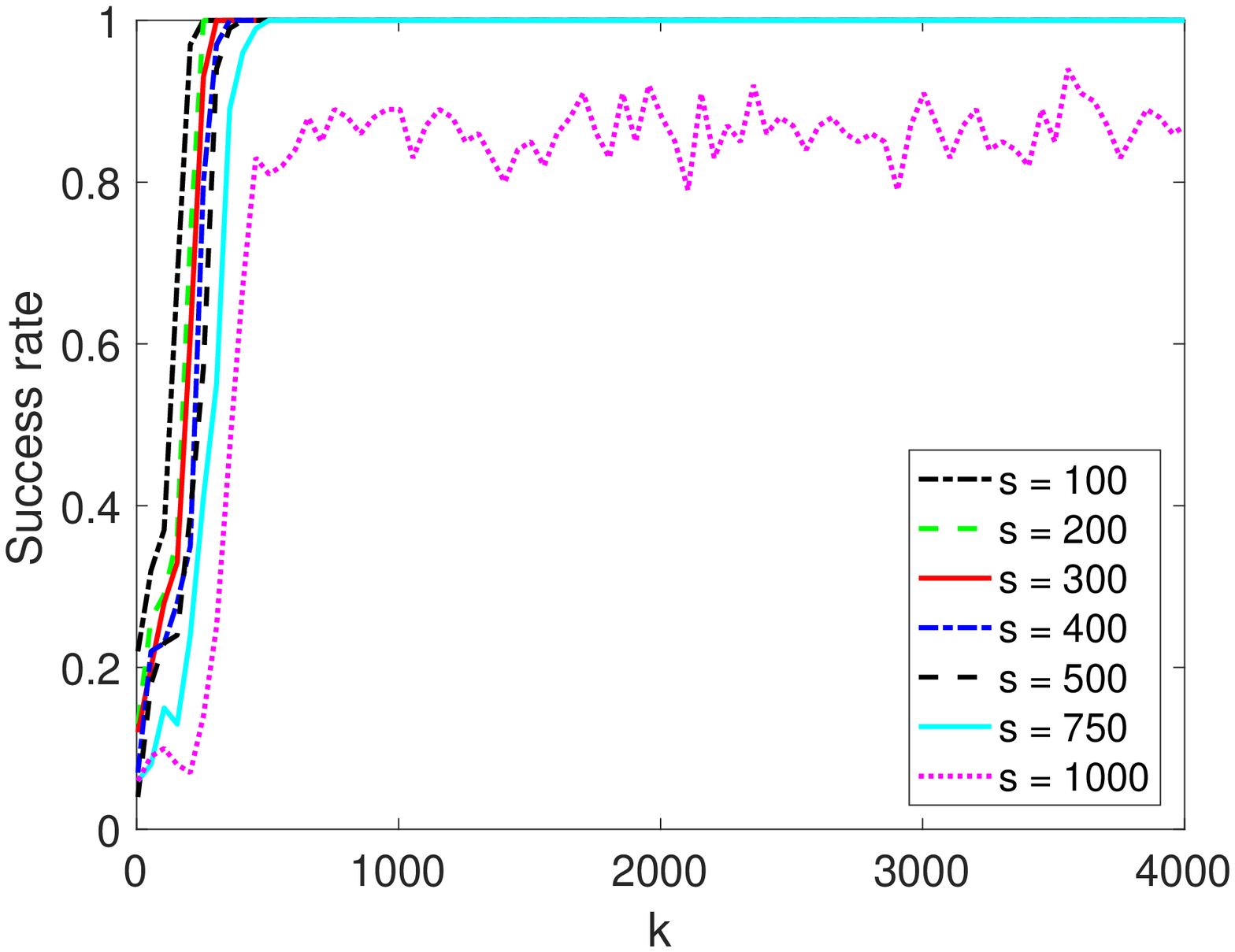}
	\caption{Plots for Method \ref{WKwoR} on $50000 \times 100$ correlated system (normalized) with various number of corrupted equations, $s$.  Upper left: $k^*$ as given in Lemma \ref{lem:probbound}; upper middle: Lower bound on probability of successfully detecting all corrupted equations in single round as given in Lemma \ref{lem:probbound}; upper right: Lower bound on probability of successfully detecting all corrupted equations in one round out of $W = \lfloor \frac{m-n}{s} \rfloor$ rounds as given in Theorem \ref{thm:d=Iwindowedprob};  lower left: Experimental ratio of success of detecting all $s$ corrupted equations in one round out of $W = \lfloor \frac{m-n}{s} \rfloor$ for choice of $\delta$; lower right: Experimental ratio of success of detecting all $s$ corrupted equations in one round out of $W = \lfloor \frac{m-n}{s} \rfloor$ for choice of $k$ (number of RK iterations per round).}
	\label{fig:correlated}
\end{figure}

\begin{figure}
	\begin{center}
		\includegraphics[width=0.49\textwidth]{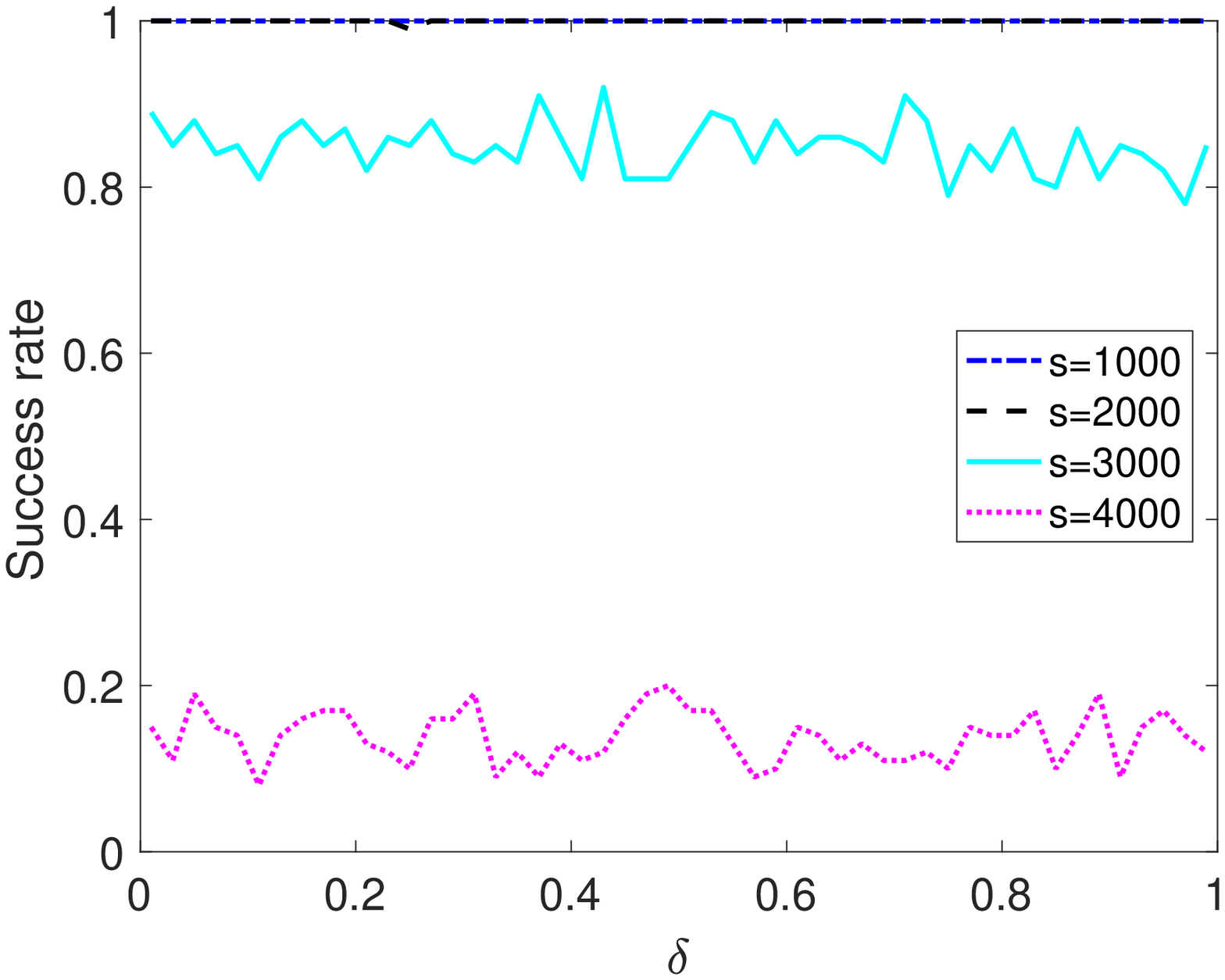}
		\includegraphics[width=0.49\textwidth]{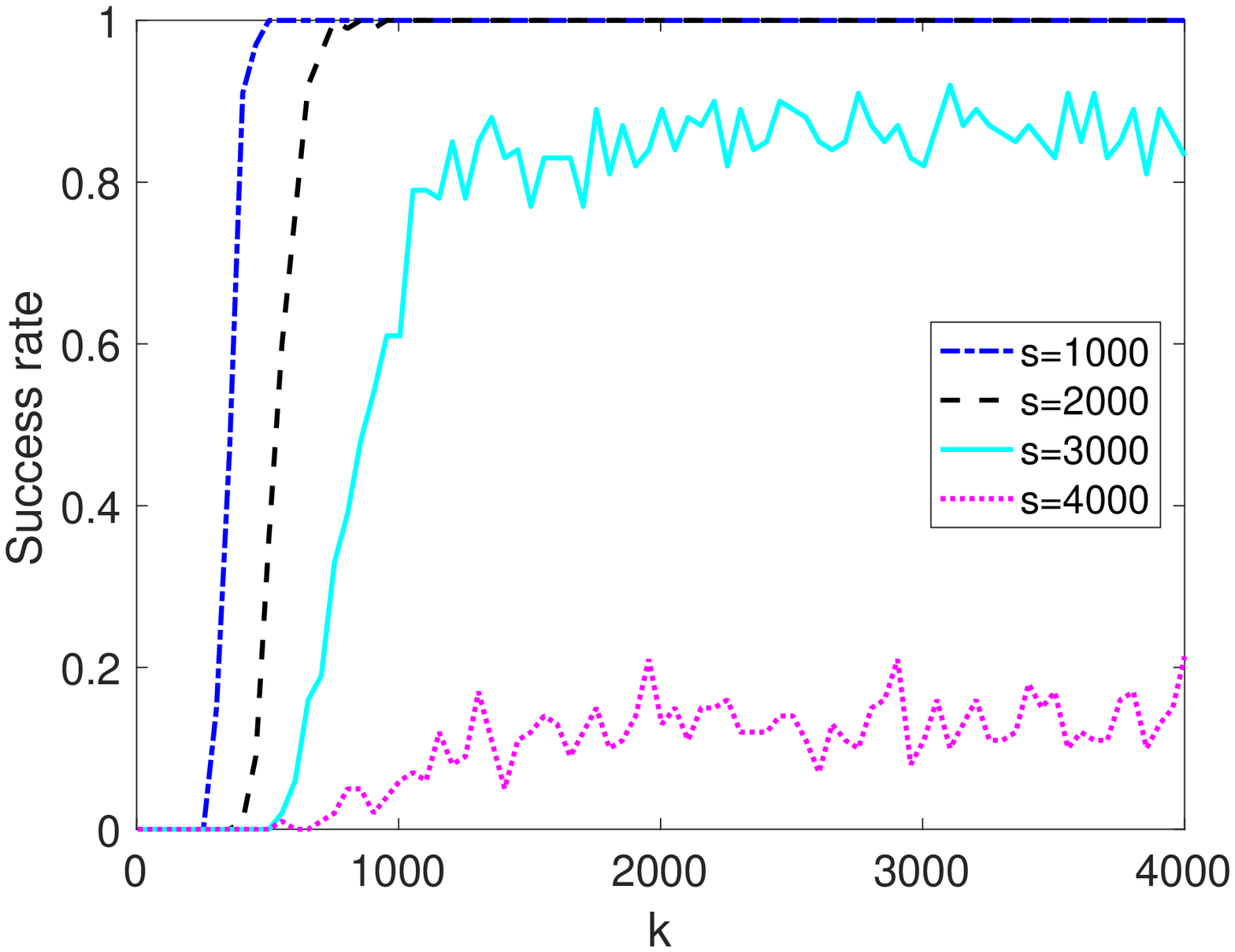}
	\end{center}
	\caption{Plots for Method \ref{WKwoR} on $50000 \times 100$ correlated system (normalized) with various number of corrupted equations, $s$. Left: Experimental ratio of successfully detecting all $s$ corrupted equations after all $W = \lfloor \frac{m-n}{s} \rfloor$ rounds for choice of $\delta$; right: Experimental ratio of successfully detecting all $s$ corrupted equations after all $W = \lfloor \frac{m-n}{s} \rfloor$ rounds for choice of $k$ (number of RK iterations per round).}
	\label{fig:correlatedtotal}
\end{figure}

First, note that $k^*$, as given in Lemma \ref{lem:probbound}, depends very weakly upon $s$.  In the upper left plots of Figures \ref{fig:gaussian} and \ref{fig:correlated}, the values of $k^*$ plotted are very slightly different for different values of $s$ (the line thickness makes these distinct lines appear as one).  Note that the definition of $k^*$ (the theoretically required number of RK iterations to reach the detection horizon) is defined by the theoretical convergence rate which can be quite pessimistic.  As has been seen in the lower right plots of Figures \ref{fig:gaussian} and \ref{fig:correlated}, and in the right plots of Figures \ref{fig:gaussiantotal} and \ref{fig:correlatedtotal}, detection can be successful with a significantly smaller choice of $k$.  Note that in Figure \ref{fig:gaussian}, the theoretically required $k^*$ value is between $600$ and $1400$ but $k > 500$ seems to perform well.  Likewise, in Figure \ref{fig:correlated}, the theoretically required $k^*$ value is between $3000$ and $8000$ but $k > 500$ seems to perform well.  It is unsurprising that this bound is even more pessimistic for the correlated system, as the conditioning of a correlated system causes the RK convergence guarantee to be quite poor, while experimentally we see a much faster rate of convergence.

\subsection{Implementation considerations}There are several options for $d$, some more practically feasible than others.  Our theoretical results are probabilistic guarantees for Method \ref{WKwoR} with $d \ge s$, which of course cannot be known in practice, as well as for Method \ref{WKwoRUS} with  $d \ge 1$, which is practical, but the method is more expensive computationally. In practice, one could choose $d$ as the user estimate for $s$.

The choice of $d$ and $W$ are complementary in that increasing $d$ decreases $W$ (since one may have less rounds if in each round more equations are selected).  In selecting $d$ and $W$, we wish to balance the desire to increase $d$ in order to record all of the corrupted equations when we have a successful round with the fact that as $d$ grows, we can have less rounds.  We never discard or record more than $m-n$ of the constraints, as at the end of any method, we wish to have a full rank linear system of equations remaining whose solution is $\ve{x}^*$, the pseudo-solution.  Thus, for any $d$ we may not run more than $W = \lfloor\frac{m-n}{d}\rfloor$ rounds.  However, in practice, this choice of $W$ may be larger than is necessary.  This is explored in Figures \ref{fig:dtotalprob} and \ref{fig:Wtotalprob}.  In the experiment producing Figure \ref{fig:dtotalprob}, we ran $W = \lfloor \frac{m-n}{d} \rfloor$ rounds of Method \ref{WKwoR} with $k^*$ (defined in Lemma \ref{lem:probbound}) RK iterations selecting $d$ equations each round, and record the ratio of successful trials, which selected all $s$ corrupted equations after all rounds, out of 100 trials.  The figure on the left shows the results for a Gaussian system, while the figure on the right shows the results for a correlated system.  In the experiment producing Figure \ref{fig:Wtotalprob}, we ran $W \le \lfloor \frac{m-n}{s} \rfloor$ rounds of Method \ref{WKwoR} with $k^*$ (defined in Lemma \ref{lem:probbound}) RK iterations selecting $s$ equations each round, and record the ratio of successful trials, which selected all $s$ corrupted equations after all rounds, out of 100 trials.  The figure on the left is for a Gaussian system, while the figure on the right is for a correlated system.  Both are corrupted with random integers in $[1,5]$ in randomly selected entries of $\ve{b}$, so $\epsilon^* = 1$.  

\begin{figure}
	\begin{center}
		\includegraphics[width=0.49\textwidth]{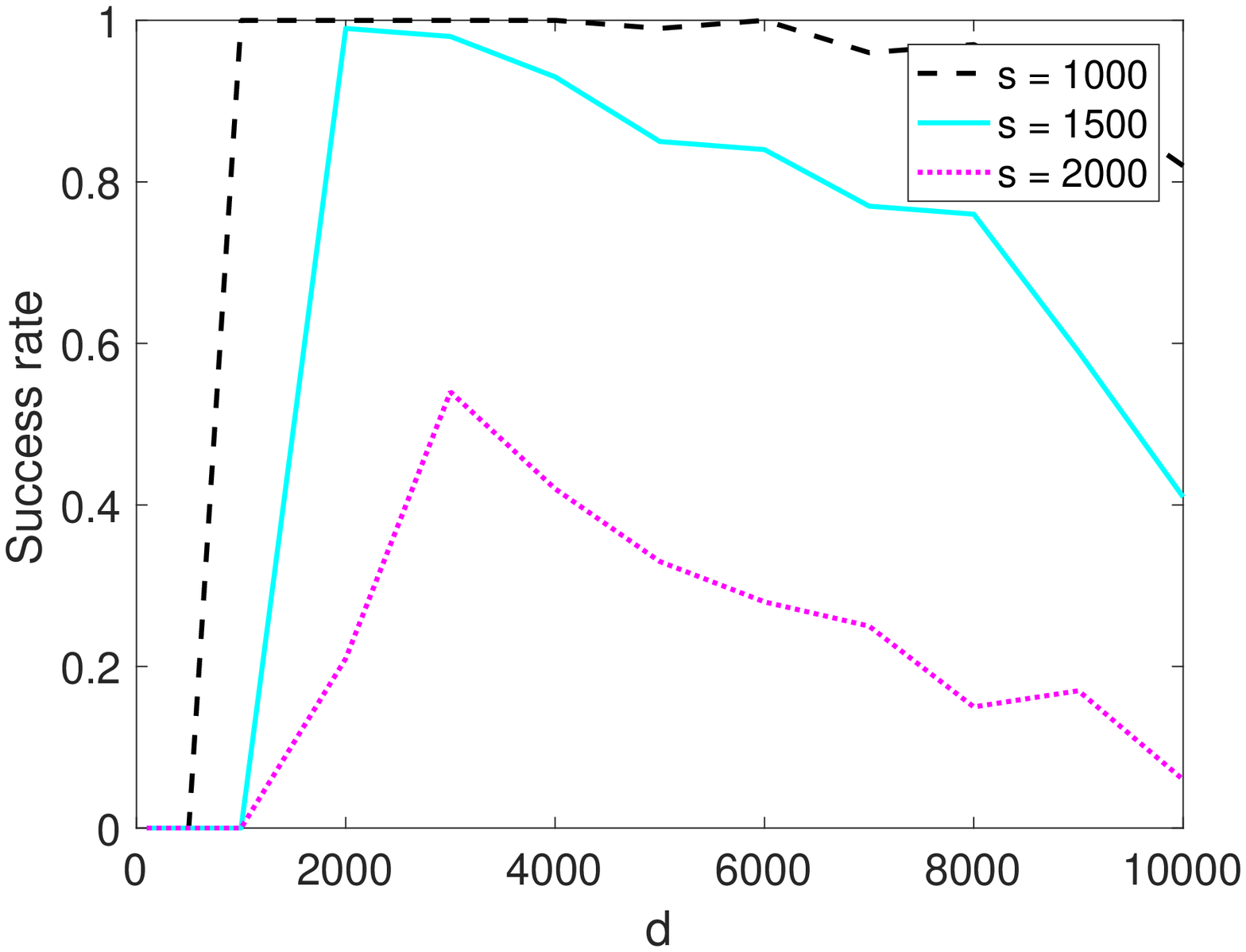}
		\includegraphics[width=0.49\textwidth]{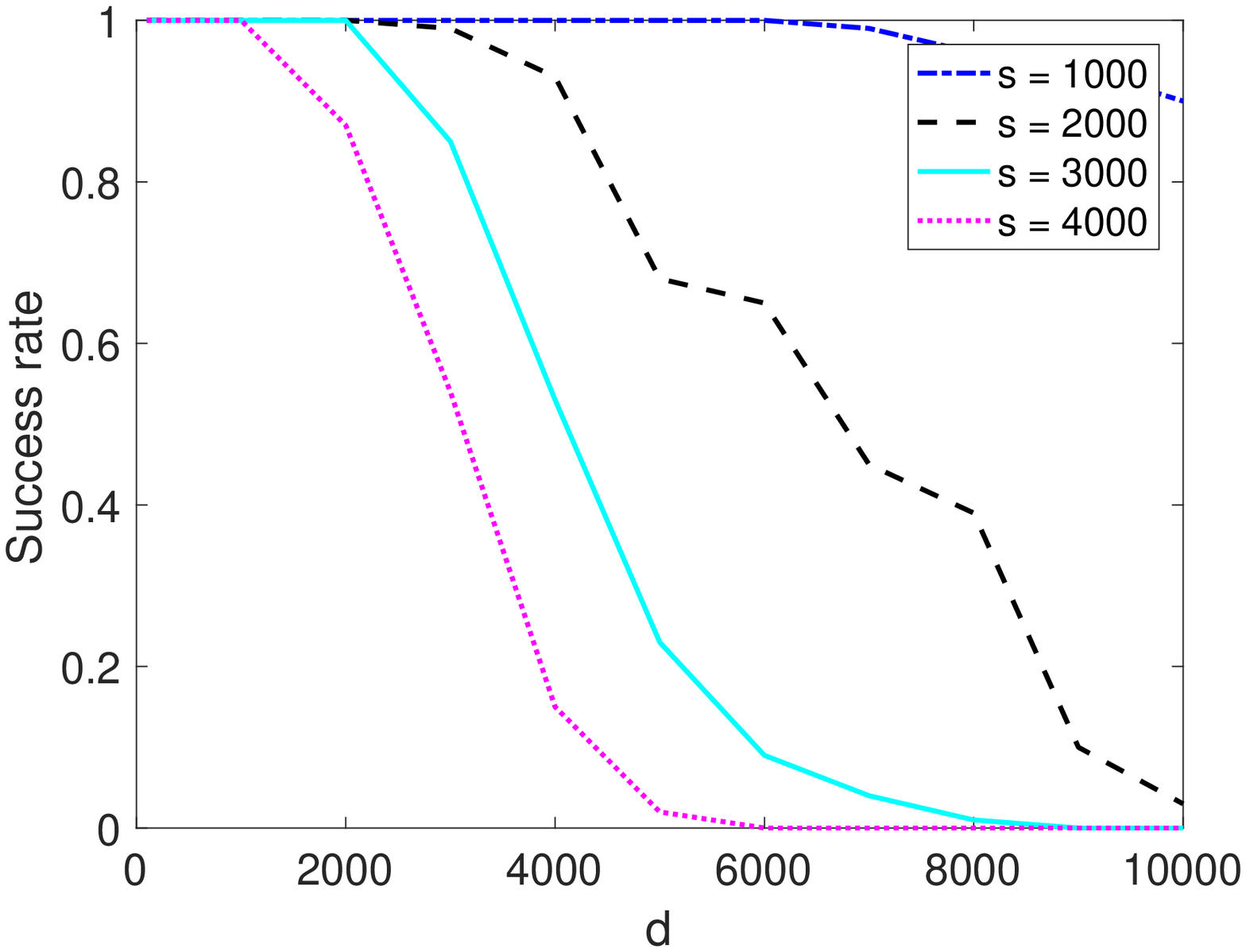}
	\end{center}
	\caption{Left: Experimental ratio of success of detecting all $s$ corrupted equations after all $W =  \lfloor\frac{m-n}{d}\rfloor$ rounds of $k^*$ (as given in Lemma 2) RK iterations selecting $d$ equations for $50000 \times 100$ Gaussian system with $s$ corrupted equations. Right: Experimental ratio of success of detecting all $s$ corrupted equations after all $W = \lfloor\frac{m-n}{d}\rfloor$ rounds of $k^*$ (as given in Lemma 2) RK iterations selecting $d$ equations for $50000 \times 100$ correlated system with $s$ corrupted equations.}
	\label{fig:dtotalprob}
\end{figure}

\begin{figure}
	\begin{center}
		\includegraphics[width=0.49\textwidth]{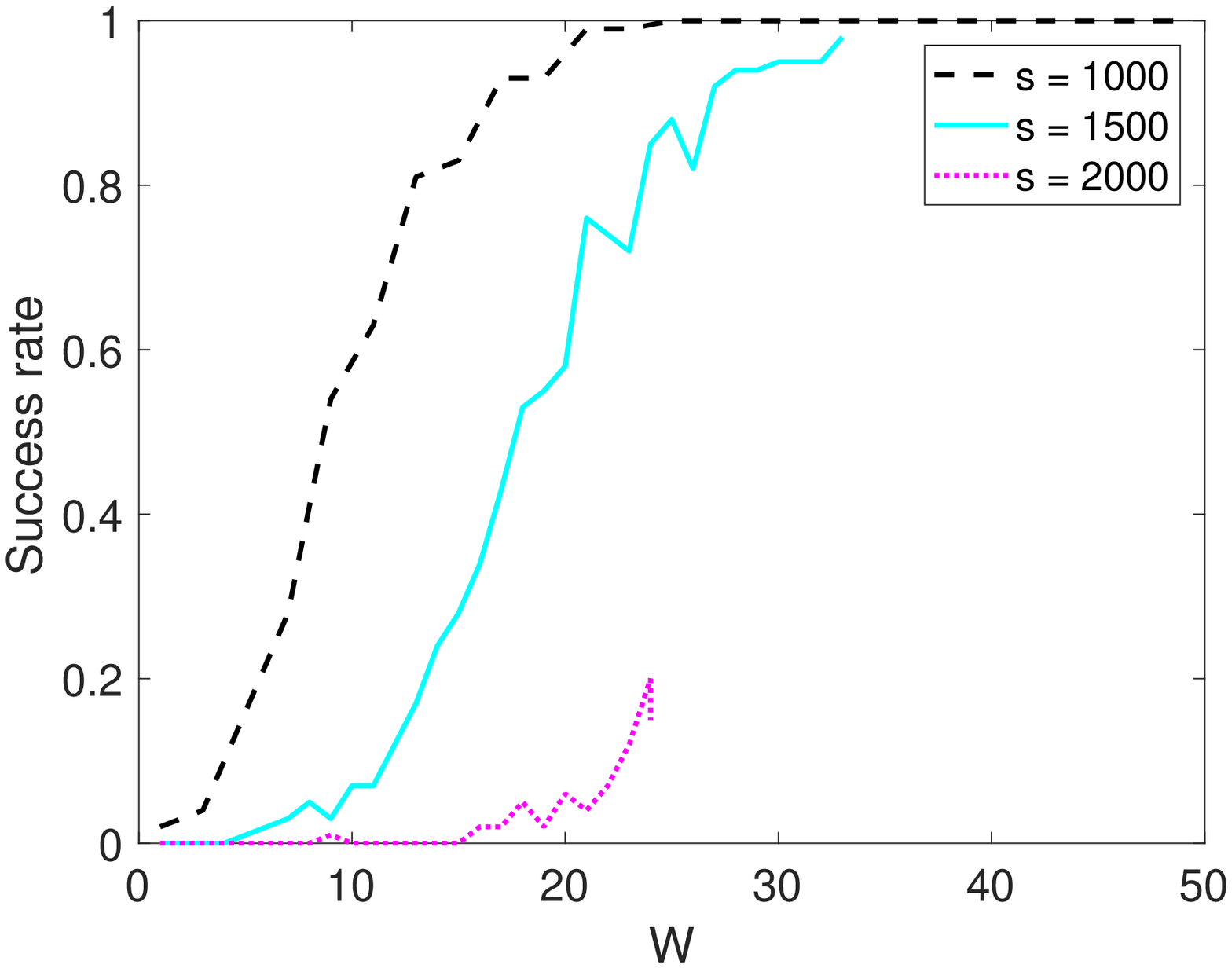}
		\includegraphics[width=0.49\textwidth]{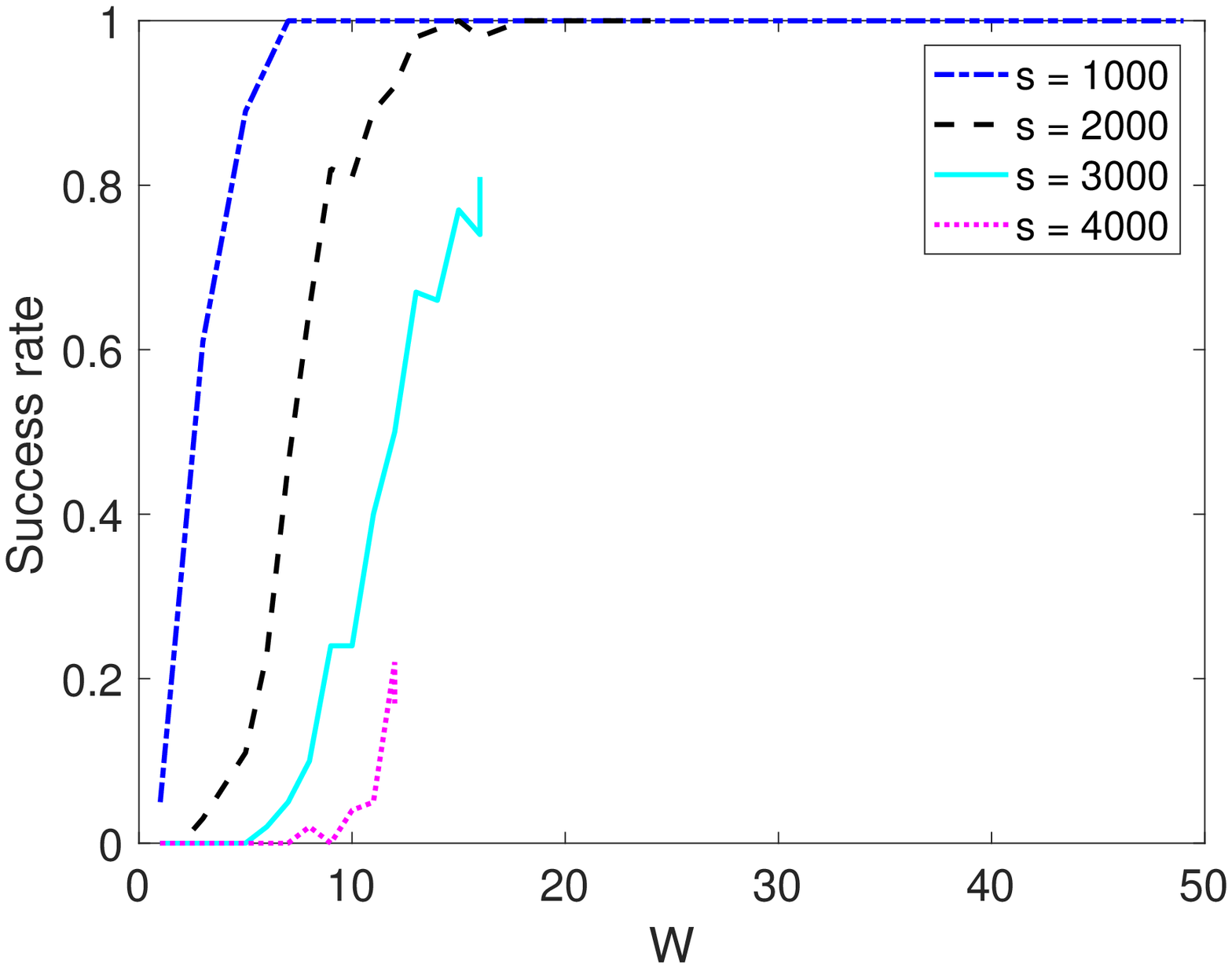}
	\end{center}
	\caption{Left: Experimental ratio of success of detecting all $s$ corrupted equations after all $W \le \lfloor \frac{m-n}{s} \rfloor$ rounds of $k^*$ (as given in Lemma 2) RK iterations for $50000 \times 100$ Gaussian system with $s$ corrupted equations. Right: Experimental ratio of success of detecting all $s$ corrupted equations after all $W \le \lfloor \frac{m-n}{s} \rfloor$ rounds of $k^*$ (as given in Lemma 2) RK iterations for $50000 \times 100$ correlated system with $s$ corrupted equations.}
	\label{fig:Wtotalprob}
\end{figure}

Method \ref{WKwR}, despite not having independent rounds, performs well in practice as is seen in Figure \ref{fig:removalratio}.  In this experiment, we perform $W = \lfloor \frac{m-n}{s} \rfloor$ rounds of Method \ref{WKwR} with $k$ RK iterations, removing $s$ equations each round.  The plot shows the ratio of successful trials, in which all $s$ corrupted equations are removed after all $W$ rounds, out of 100 trials.  The method is run on a Gaussian system which is corrupted by random integers in $[1,5]$ in randomly selected entries of $\ve{b}$, so $\epsilon^* = 1$.

\begin{figure}
	\includegraphics[width=0.6\textwidth]{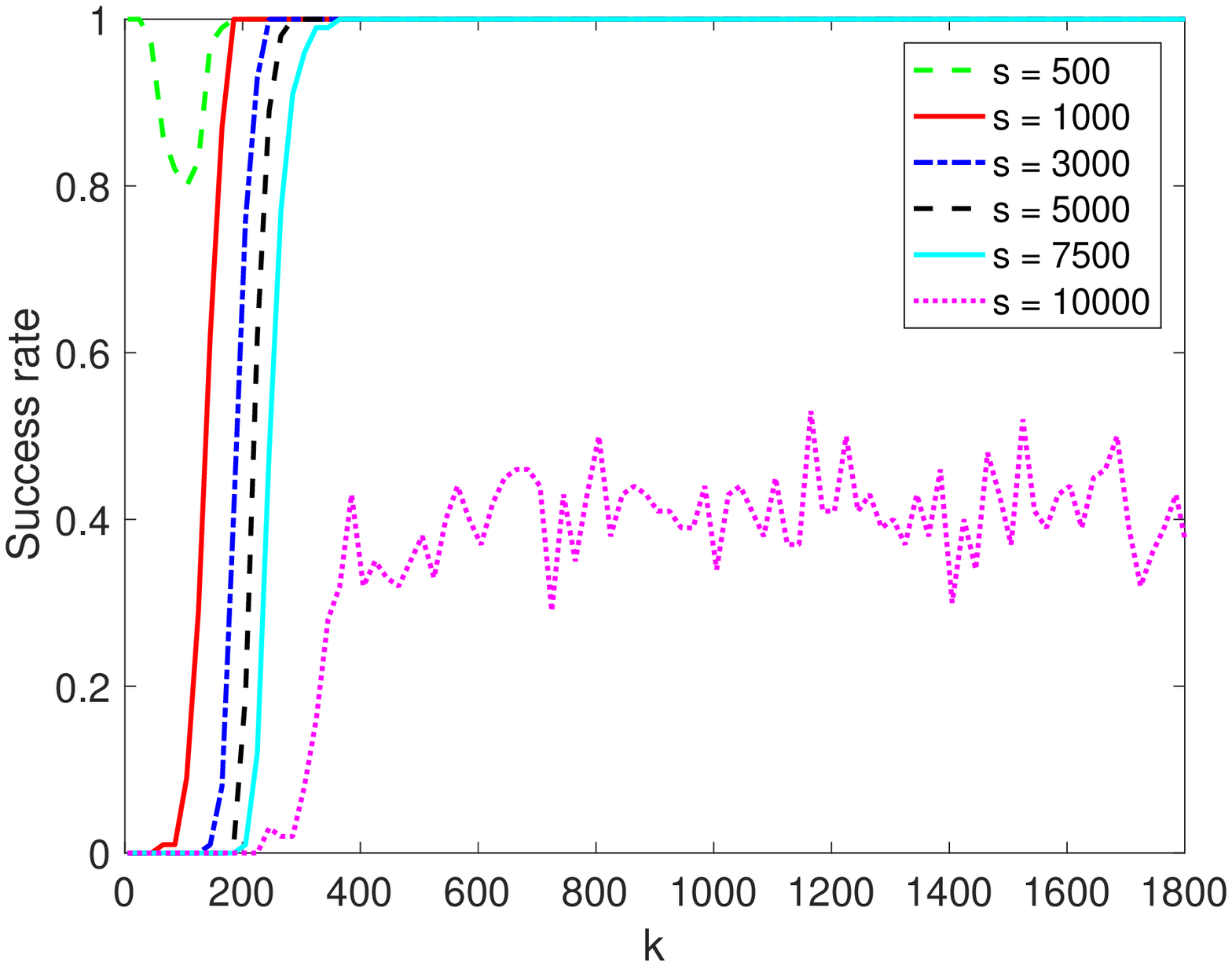}
	\caption{Experimental ratio of success of removing (Method \ref{WKwR}) all $s$ corrupted equations after all $W = \lfloor \frac{m-n}{s} \rfloor$ rounds for $50000\times 100$ Gaussian system with $s$ corrupted equations and choice of $k$.}\label{fig:removalratio}
\end{figure}

\subsection{Real Data Experiments}\label{sec:realdataexperiments}

We additionally test the methods on real data.  Our first experiments are on tomography problems, generated using the Matlab
Regularization Toolbox by P.C. Hansen (\url{http://www.imm.dtu.dk/~pcha/Regutools/})
\cite{hansen2007regularization}. In particular we present a 2D tomography problem $A\ve{x} = \ve{b}$ for an $m \times n$ matrix with $m = f N^2$ and $n = N^2$. Here $A$ corresponds to the absorption along a random line through an $N \times N$ grid. In our experiments we set $N = 20$ and the oversampling factor $f = 3$.  This yielded a matrix $A$ with condition number $\kappa(A) = 2.08$.  As the resulting system was consistent, we randomly sampled $s = 100$ constraints uniformly from among the rows of $A$ and corrupted the right-hand side vector $\ve{b}$ by adding $1$ in these entries, so $\epsilon^* = 1$.  This corrupted system has $k^* = 66334$ (as given in Lemma \ref{lem:probbound}).  Figure \ref{fig:tomorates} contains the average fraction of the $s=100$ corrupted equations detected or removed for Methods \ref{WKwoR} (left) and \ref{WKwR} (right) after all $W = \lfloor\frac{m - n}{d}\rfloor$ rounds for various values of $k$ (RK iterations per round) for 100 trials.

\begin{figure}
	\includegraphics[width=0.49\textwidth]{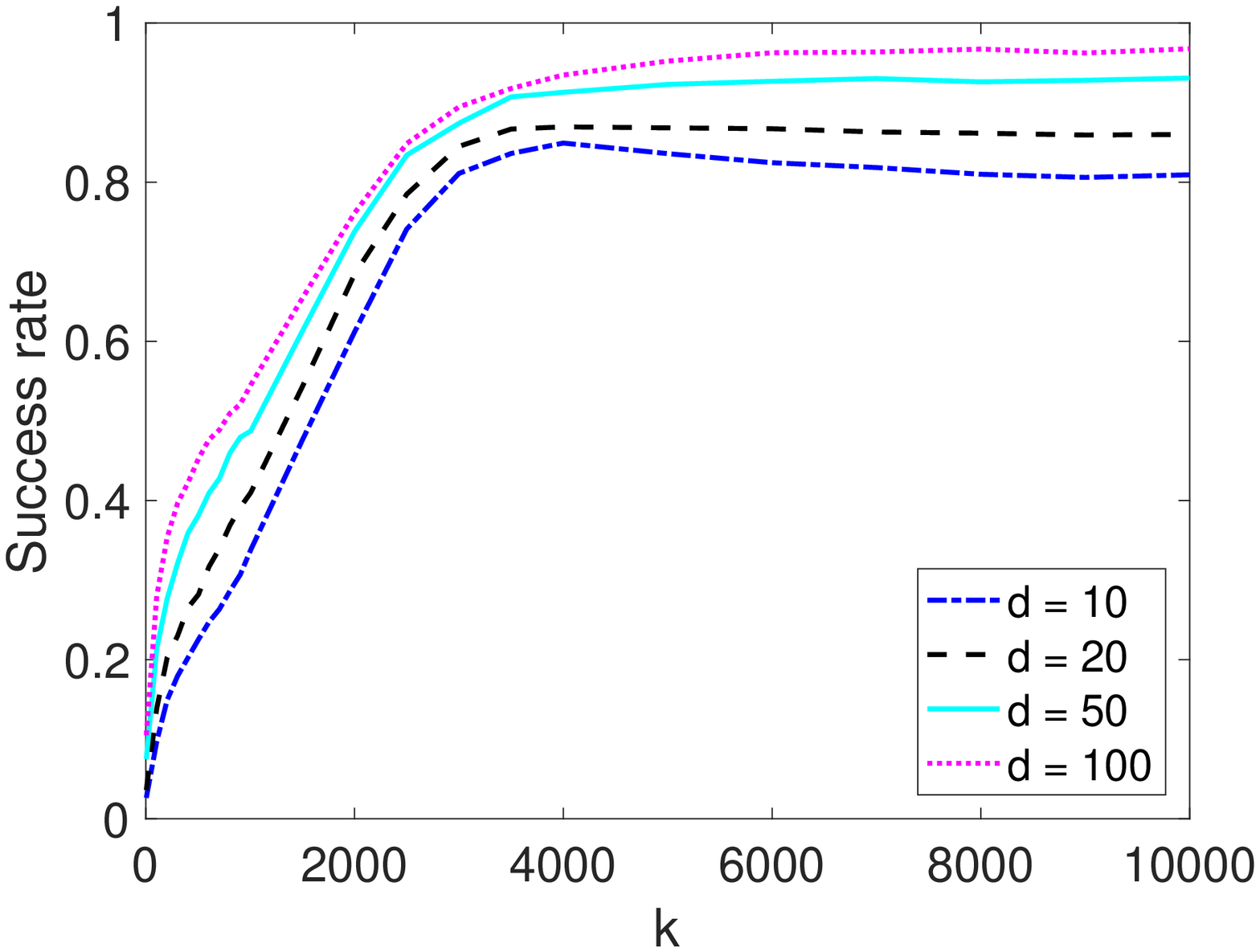}
	\includegraphics[width=0.49\textwidth]{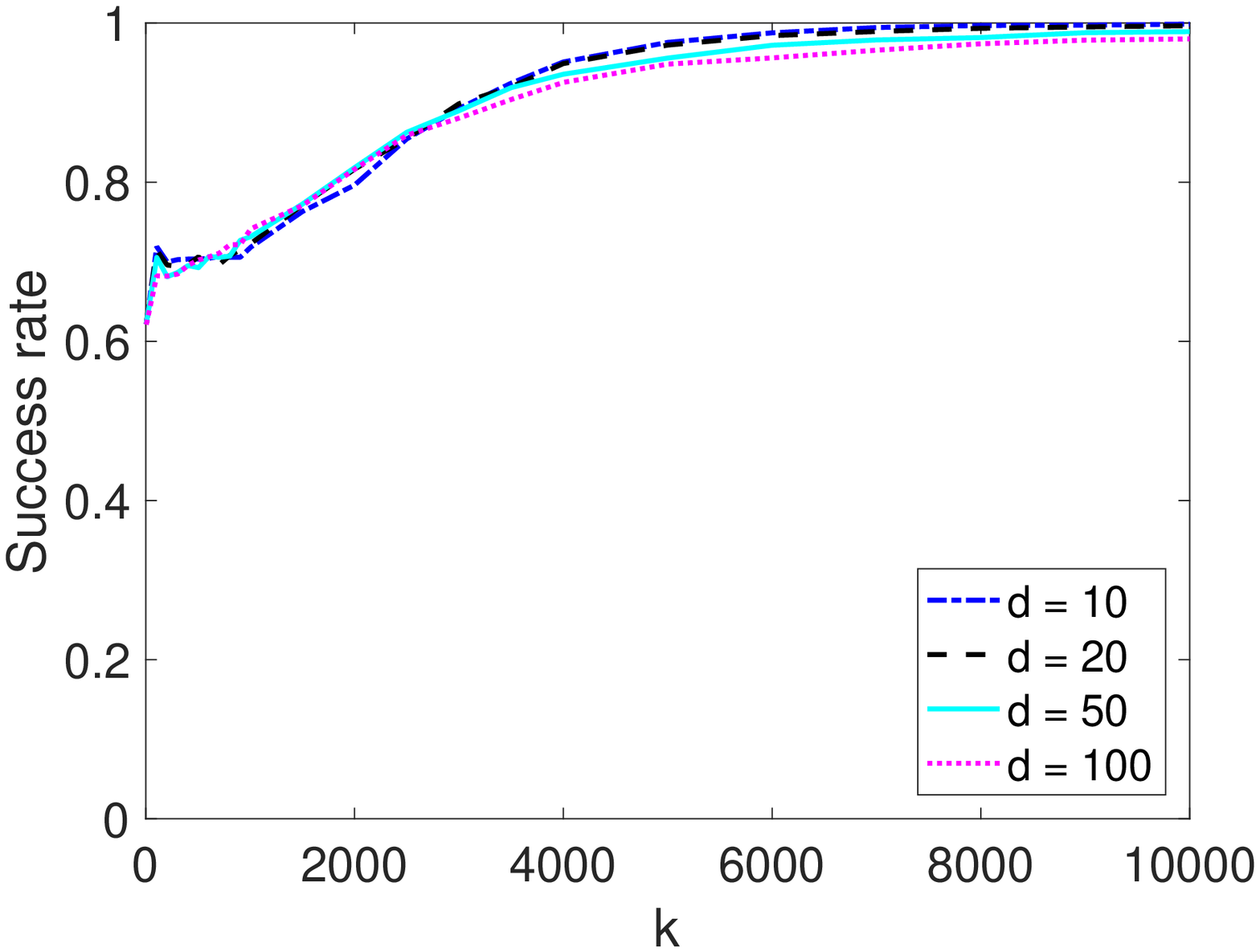}
	\caption{Plots for $1200 \times 400$ tomography system with $s = 100$ corrupted equations.  Left: average fraction of corrupted equations detected in 100 trials after all $W = \lfloor\frac{m - n}{d}\rfloor$ rounds of Method \ref{WKwoR}; right: average fraction of corrupted equations removed in 100 trials after all $W = \lfloor\frac{m - n}{d}\rfloor$ rounds of Method \ref{WKwR}.}\label{fig:tomorates}
\end{figure}

We also generated corrupted data sets using the Wisconsin (Diagnostic) Breast Cancer data set,
which includes data points whose features are computed from a digitized image of
a fine needle aspirate (FNA) of a breast mass and describe characteristics of the cell nuclei present in the
image \cite{UCI}.  This collection of data points forms our matrix $A \in \mathbb{R}^{699 \times 10}$, we construct $\ve{b}$ to form a consistent system, and then corrupt a random selection of $100$ entries of the right-hand side by adding $1$, so $\epsilon^* = 1$.  This corrupted system has $k^* = 3432$ (as given in Lemma \ref{lem:probbound}).  Figure \ref{fig:breastcancer} contains the average fraction of the $s=100$ corrupted equations detected or removed for Methods \ref{WKwoR} (left) and \ref{WKwR} (right) after all $W = \lfloor \frac{m - n}{d} \rfloor$ rounds for various values of $k$ (RK iterations per round) for 100 trials.

\begin{figure}
	\includegraphics[width=0.49\textwidth]{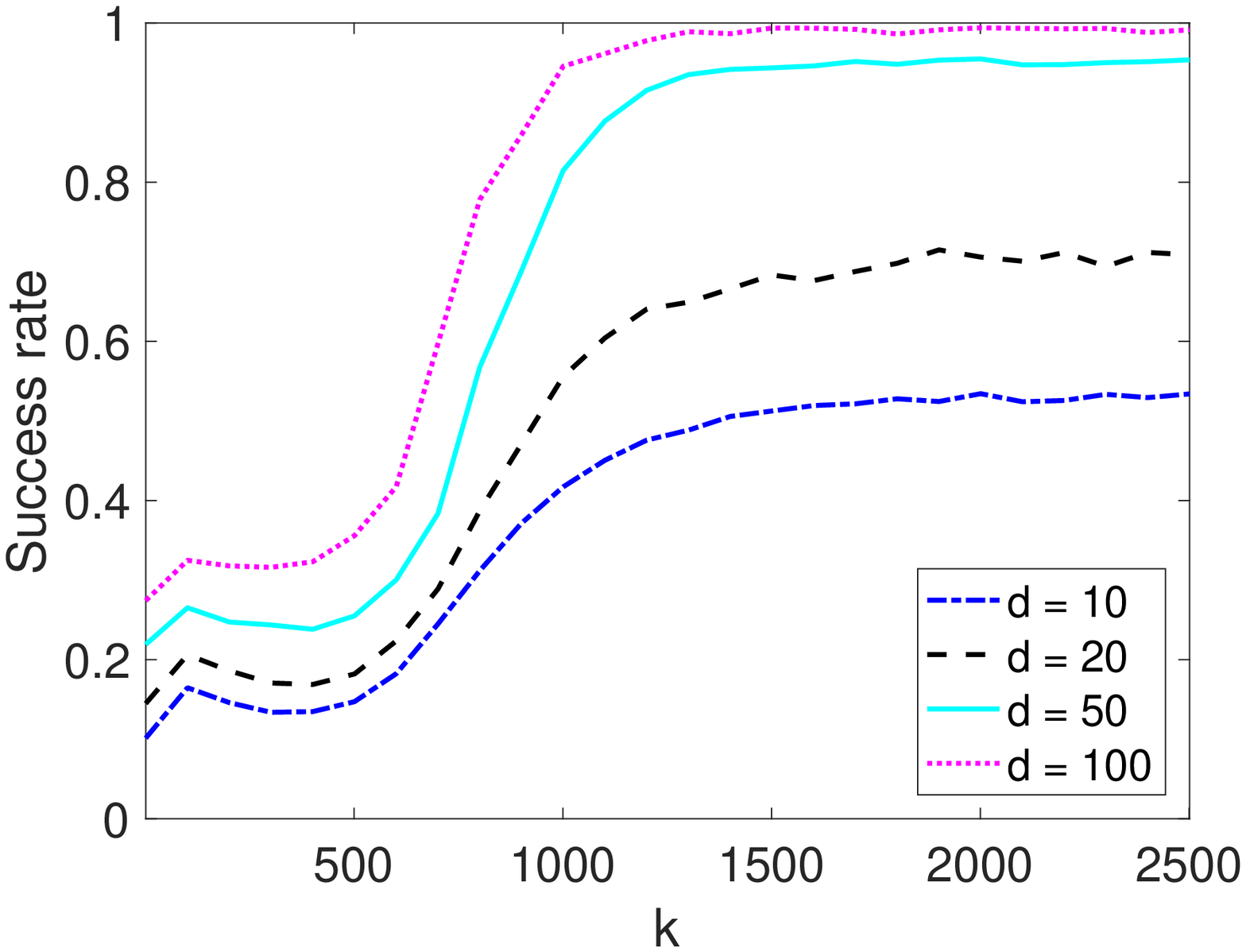}
	\includegraphics[width=0.49\textwidth]{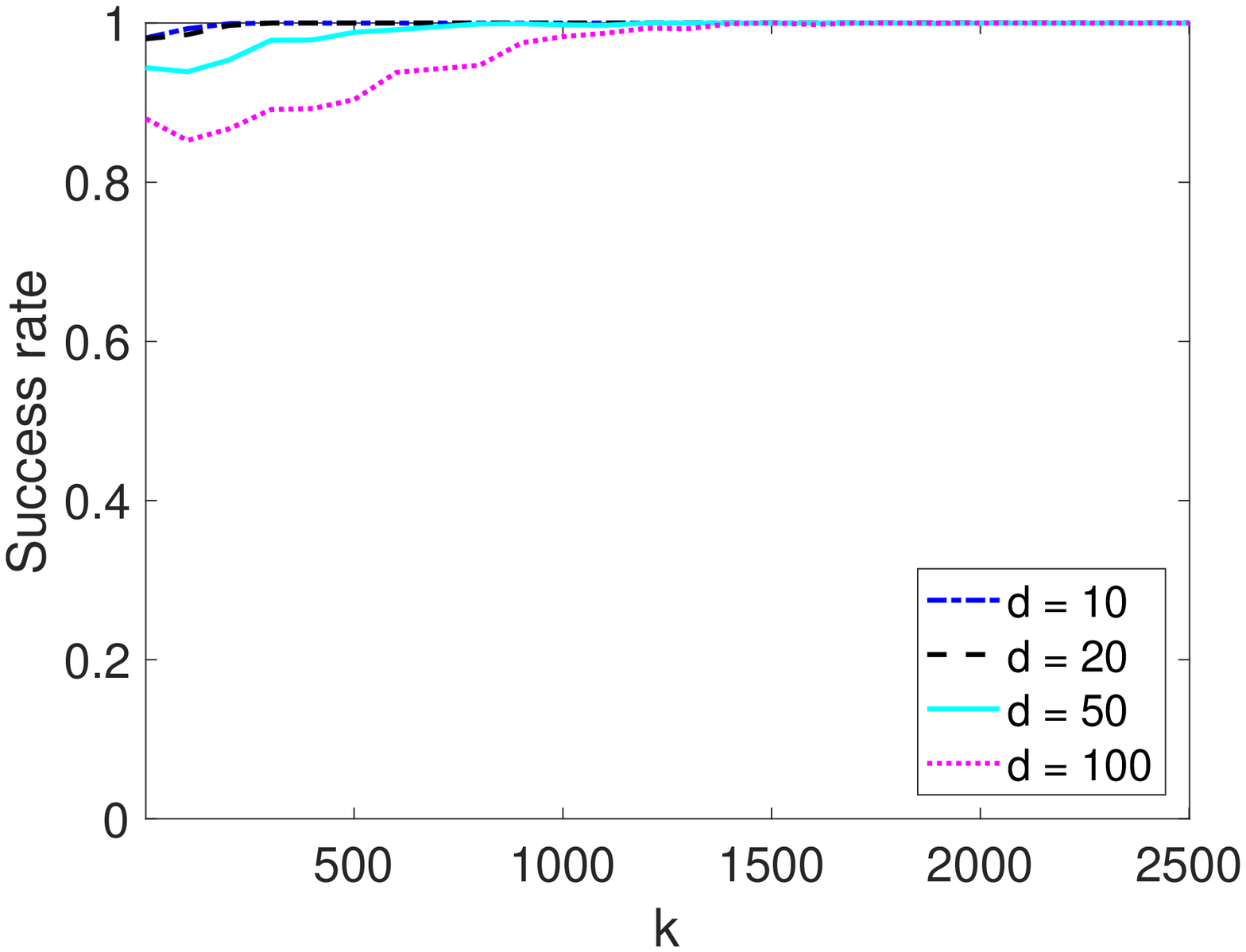}
	\caption{Plots for $699 \times 10$ system defined by Wisconsin (Diagnostic) Breast Cancer data set with $s = 100$ corrupted equations.  Left: average fraction of corrupted equations detected in 100 trials after all $W = \lfloor\frac{m - n}{d}\rfloor$ rounds of Method \ref{WKwoR}; right: average fraction of corrupted equations removed in 100 trials after all $W = \lfloor\frac{m - n}{d}\rfloor$ rounds of Method \ref{WKwR}.}\label{fig:breastcancer}
\end{figure} 

\subsection{Comparison to Existing Methods}

As previously mentioned, although related, a comparison of Methods \ref{WKwR}, \ref{WKwoR}, and \ref{WKwoRUS} to methods designed for \texttt{MAX-FEAS} are not natural.  Methods for \texttt{MAX-FEAS} are designed for a much more general, and harder class of problems than our proposed methods.  Methods for \texttt{MAX-FEAS} seek to carefully identify the largest feasible subproblem, while our methods seek to identify and discard potentially corrupted equations.  

However, one may consider our methods as iteratively computing a solution to the current system of equations with a sparse residual, and deleting those entries corresponding to the non-zero entries in the residual.  Thus, we compare the behavior of our method to convex optimization methods on the problem reformulation $\min \|r\|_1 \text{ s.t. } A\ve{x} - \ve{b} = \ve{r}$, as this reformulation should encourage sparsity in the residual.  In these experiments, we run several rounds of $k$ iterations of various convex optimization methods (initialized with the last iterate from the previous round) implemented in the built-in Matlab \texttt{fmincon} function, and remove $d$ equations corresponding to the largest entries of the computed residual, $\ve{r}$.  We measure the CPU time (using Matlab \texttt{cputime}) required to remove all corrupted equations, and compare this to the CPU time required by Method \ref{WKwR} with the same $d$ and $k$ values to remove all corrupted equations.

We test on the same problem data as described in Subsection \ref{sec:realdataexperiments}.  In Table \ref{tab:WKwR}, we report the CPU time (s) required to remove all 100 corrupted equations by Method \ref{WKwR} with $k = 8000$ and $d = 10$, and the method described above for removing $d = 10$ equations with the algorithms `\texttt{interior-point}', `\texttt{active-set}', and `\texttt{sqp}' with $k = 10$ iterations.    

\begin{table}
	\begin{tabular}{|c || c | c | c | c |}
		\hline
		& Method \ref{WKwR} & \texttt{interior-point} & \texttt{active-set} & \texttt{sqp} \\
		\hline
		tomography & 1.72 & 795.66 & 6435.21 & 3513.30 \\
		\hline
		breast cancer & 1.74 & 202.09 & 2643.15 & 507.98 \\
		\hline
	\end{tabular}
\caption{CPU time (s) required to remove all 100 corrupted equations by Method \ref{WKwR} with $k = 8000$ and $d = 10$, and the method described above for removing $d = 10$ equations with the algorithms `\texttt{interior-point}', `\texttt{active-set}', and `\texttt{sqp}' with $k = 10$ iterations.    
}\label{tab:WKwR}
\end{table}

\section{Conclusion}

We have presented a framework of methods for using randomized projection methods to detect and remove corruptions in a system of linear equations.  We provide theoretical bounds on the probability that these methods will successfully detect and remove all corrupted equations.  Moreover, we provide ample experimental evidence that these methods successfully detect corrupted equations and these results far surpass the theoretical guarantees.  

\bibliographystyle{myalpha}
\bibliography{bib}

\appendix
\section{Proofs of Main Results}

We separate our main theoretical results from their proofs so as to minimize distraction from the progression of the results and plots of the probability bounds demonstrated.

\begin{proof}[Proof of Lemma \ref{lem:detectionhorizon}]
	Suppose $\|\ve{x} - \ve{x}^*\| < \frac{1}{2} \epsilon^*$.  Note that for $\|\ve{a}_i\| = 1$, we have $$|r_i| = |A\ve{x} - \ve{b}|_i = |\ve{a}_i^T\ve{x} - b_i| = \frac{|\ve{a}_i^T\ve{x} - b_i|}{\|\ve{a}_i\|} = d(\ve{x},H_i)$$ where $d(\ve{x},H)$ is the Euclidean distance of $\ve{x}$ to the set $H$ and $H_i = \{\ve{x} : \ve{a}_i^T\ve{x} = b_i\}$ is the hyperplane defined by the $i$th equation.  Next, note that $$d(\ve{x}^*, H_i) = |\ve{a}_i^T \ve{x}^* - b_i| = |b_i^* - b_i| = \begin{cases} |\epsilon _i| &  i \in I  \\ 0 & i \not\in I \end{cases}.$$
	
	Now, consider $r_i$ for $i \in I$.  Denoting by $P_H$ the orthogonal projectiong onto $H$, note that 
	\begin{align*}
	|r_i| &= d(\ve{x}, H_i) = \|P_{H_i}(\ve{x}) - \ve{x}\|\\ 
	&= \|P_{H_i}(\ve{x}) - \ve{x}^* - (\ve{x} - \ve{x}^*)\|\\ 
	&\ge |\|P_{H_i}(\ve{x}) - \ve{x}^*\| - \|\ve{x} - \ve{x}^*\||\\
	&\ge d(\ve{x}^*,H_i) - \|\ve{x} - \ve{x}^*\| \\
	&> \frac{1}{2} \epsilon^*,
	\end{align*}
	where the first inequality follows from the triangle inequality and the second from the fact that $\|P_{H_i}(\ve{x}) - \ve{x}^*\| \ge d(\ve{x^*},H_i) = |\epsilon_i| \ge \epsilon^* > \|\ve{x} - \ve{x}^*\|.$
	
	For $i \not\in I$, since $\ve{x}^* \in H_i$, $$r_i = d(\ve{x},H_i) \le \|\ve{x} - \ve{x}^*\| < \frac{1}{2} \epsilon^*.$$  To summarize, 
	$$|r_i| = |\ve{a}_i^T \ve{x}_k - b_i| \begin{cases} < \frac{1}{2} \epsilon^* &\text{for } i \not\in I \\ > \frac{1}{2} \epsilon^* &\text{for } i \in I \end{cases}.$$
	
	Thus, if we consider the above, $D = \underset{D \subset [A], |D| = d}{\text{argmax}} \; \underset{i \in D}{\sum} |A\ve{x} - \ve{b}|_i$ is clearly a subset of $I$ for $d \le s$.
\end{proof}

\begin{proof}[Proof of Lemma \ref{lem:probbound}]
	Let $E$ be the event that $i_1, i_2, ..., i_{k^*} \not\in I$ for all index selections in round $W$.  Note that $$\mathbb{P}(E) \ge \bigg(\frac{m-s}{m}\bigg)^{k^*}$$ since there are $m - s$ consistent equations and the equations are being selected uniformly at random.
	
	Now, note that if one conditions upon $E$ and looks at the expected value of $\|\ve{x}_{k^*} - \ve{x}^*\|^2$, this will be the same value as the expectation of $\|\ve{x}_{k^*} - \ve{x}^*\|^2$ if $\ve{x}_{k^*}$ is created with randomized Kaczmarz run on $A_*, \ve{b}_*$; we denote this expectation as $\mathbb{E}_{A_*,\ve{b}_*}[\|\ve{x}_{k^*} - \ve{x}^*\|^2]$.  Applying Theorem \ref{strohmervershynin}, we see that
	\begin{align*}
	\mathbb{E}[\|\ve{x}_{k^*} - \ve{x}^*\|^2 | E] &= \mathbb{E}_{A_*,\ve{b}_*}[\|\ve{x}_{k^*} - \ve{x}^*\|^2]
	\\&\le \bigg(1 - \frac{\sigma_{\min}^2(A_*)}{m-s}\bigg)^{k^*} \|\ve{x}_0 - \ve{x}^*\|^2
	\\&= \bigg(1 - \frac{\sigma_{\min}^2(A_*)}{m-s}\bigg)^{k^*} \|\ve{x}^*\|^2.
	\end{align*}
	Now, since $k^* \ge \frac{\log\Big(\frac{\delta(\epsilon^*)^2}{4\|\ve{x}^*\|^2}\Big)}{\log\Big(1 - \frac{\sigma_{\min}^2(A_*)}{(m-s)}\Big)}$, we have $\Big(1 - \frac{\sigma_{\min}^2(A_*)}{m-s}\Big)^{k^*} \le \frac{\delta(\epsilon^*)^2}{4\|\ve{x}^*\|^2}$ and so $$\mathbb{E}[\|\ve{x}_{k^*} - \ve{x}^*\|^2 | E] \le \frac{\delta}{4} (\epsilon^*)^2.$$
	
	Applying the conditional Markov inequality, we have 
	\begin{align*}
	\mathbb{P}[\|\ve{x}_{k^*} - \ve{x}^*\|^2 > \frac{1}{4}(\epsilon^*)^2 | E] &\le \frac{\mathbb{E}[\|\ve{x}_{k^*} - \ve{x}^*\|^2 | E]}{\frac{1}{4}(\epsilon^*)^2}
	\\&\le \frac{\frac{\delta}{4} (\epsilon^*)^2}{\frac{1}{4}(\epsilon^*)^2} = \delta
	\end{align*}
	
	Thus, $\mathbb{P}[\|\ve{x}_{k^*} - \ve{x}^*\|^2 \le \frac{1}{4}(\epsilon^*)^2 | E] \ge 1 - \delta$ so $$\mathbb{P}[\|\ve{x}_{k^*} - \ve{x}^*\| \le \frac{1}{2}\epsilon^*] \ge (1 - \delta)\bigg(\frac{m-s}{m}\bigg)^{k^*}.$$
\end{proof}

\begin{proof}[Proof of Theorem \ref{thm:d=Iwindowedprob}]
	Since $d \ge s$, we need only have one `successful' round where $\|\ve{x}_{k^*} - \ve{x}^*\| < \frac{1}{2} \epsilon^*$ in order to guarantee detection of all of the corrupted equations, by Lemma \ref{lem:detectionhorizon}.  Since all of the rounds are independent from each other in Method \ref{WKwoR} and by Lemma \ref{lem:probbound} the probability that $\|\ve{x}_{k^*} - \ve{x}^*\| < \frac{1}{2} \epsilon^*$ is at least $p := (1-\delta)\Big(\frac{m-s}{m}\Big)^{k^*}$, we may bound the probability of success by that of a binomial distribution with parameters $W$ and $p$.  Thus, success happens with probability at least $$1 - \bigg[1 - (1-\delta)\Big(\frac{m-s}{m}\Big)^{k^*}\bigg]^{W}.$$
\end{proof}

\begin{proof}[Proof of Theorem \ref{thm:dsmallerthanIwindowedprob}]
	Since $d \ge 1$ and we are selecting unique indices in each iteration of Method \ref{WKwoRUS}, we need to have $\lceil s/d \rceil$ `successful' rounds where $\|\ve{x}_{k^*} - \ve{x}^*\| < \frac{1}{2} \epsilon^*$ in order to guarantee detection of all of the corrupted equations, by Lemma \ref{lem:detectionhorizon}.  Since all of the rounds of RK iterations are independent from each other in Method \ref{WKwoRUS} and by Lemma \ref{lem:probbound} the probability that $\|\ve{x}_{k^*} - \ve{x}^*\| < \frac{1}{2} \epsilon^*$ is at least $p:= (1-\delta)\Big(\frac{m-s}{m}\Big)^{k^*}$, we may bound the probability of success by that of a cumulative binomial distribution with parameters $W$ and $p$ and we calculate the probability that the number of successes, $j \ge \lceil s/d \rceil$.  Thus, success happens with probability defined by the probabilities that less than $\lceil s/d \rceil$ rounds are successful.  The probability of success is bounded below by $$1 - \sum_{j=0}^{\lceil s/d \rceil - 1} {W \choose j} p^j (1-p)^{W-j}.$$
\end{proof}

\end{document}